\numberwithin{equation}{section} 
\newtheorem{define}{Definition}[section]
\newtheorem{proposition}{Proposition}[section]
\newtheorem{theorem}{Theorem}
\newtheorem{corollary}{Corollary}[section]
\newtheorem{remark}{Remark}[section]
\newtheorem{lemma}{Lemma}[section]
\newtheorem{assume}{Assumption}
\definecolor{myred}{cmyk}{0, 1, 1, 0}
\newcommand{\E}{\mathbb{E}}
\newcommand{\Proba}{\mathbb{P}}
\newcommand*\myref[1]{{\normalfont (\ref{#1})}}
\newcommand*\mycite[1]{{\normalfont \cite{#1}}}
\title{Weak and strong mean-field limits for stochastic Cucker-Smale particle systems}
\date{\vspace{-5ex}}
\author{Angelo Rosello {\vspace{2mm}} \\
\textit{
Univ Rennes, CNRS, IRMAR - UMR 6625, F-35000 Rennes, France }
}
\begin{document}

\maketitle

\begin{abstract}
We consider a particle system with a mean-field-type interaction perturbed by some common and individual noises. When the interacting kernels are sublinear and only locally Lipschitz-continuous, relying on arguments based on the tightness of random measures in Wasserstein spaces, we are able to construct a weak solution of the corresponding limiting SPDE. In a setup where the diffusion coefficient on the environmental noise is bounded, this weak convergence can be turned into a strong $L^p(\Omega)$ convergence and the propagation of chaos for the particle system can be established. The systems considered include perturbations of the Cucker-Smale model for collective motion.
\vspace{3mm}

\textbf{Keywords:} stochastic particle systems, mean-field limit, propagation of chaos, stochastic partial differential equations, Cucker-Smale model, collective motion.
\end{abstract}

\vspace{10mm}

\tableofcontents


\clearpage

\begin{section}{Introduction} \label{1}

\begin{subsection}{Overview of the model.}

Flocking, or swarming, is a phenomenon consistently observed in nature where individuals from a population (birds, fish, insects, bacterias...) tend to naturally align their trajectories without the need of a leadership. One of the most commonly studied model which intends to describe this kind of behavior is the Cucker-Smale model, introduced in \cite{cs1} and \cite{cs2}.

 In this model, each individual interacts with the group in a mean-field-like manner: denoting by $X^{i,N}$, $V^{i,N} \in \mathbb{R}^d$ the position and velocity of the $i$-th individual, the behavior of the system can be written as
\begin{equation}
\left\{
\begin{array}{l c l}
\displaystyle{ \frac{d}{dt} } X^{i,N}_t & = & V^{i,N}_t \\
\displaystyle{ \frac{d}{dt} } V^{i,N}_t & = & \displaystyle{ \frac{1}{N} \sum_{j=1}^N \psi(X^{i,N}_t - X^{j,N}_t)(V^{j,N}_t - V^{i,N}_t) }
\end{array}
\right.
\label{cs}
\end{equation}
where the weight function $\psi : \mathbb{R} \to \mathbb{R}^+$ is even and bounded, typically of the form
\begin{align*}
\psi(x-y) = \frac{\lambda}{(1 + |x-y|^2)^\gamma}, \; \; \; \lambda, \gamma > 0.
\end{align*}
In order to take into account unpredictable phenomena of different natures, it is rather natural to perturb this deterministic model with some noise. 
In \cite{oldandnew}, where the flocking phenomenon (alignment of speeds, distance between the individuals bounded over time) is studied in a variety of different stochastic Cucker-Smale models, three different kinds of perturbations are identified. 

The first one considers the degree of freedom of each individual by adding some independent noise, dragged by a brownian motion $B^i$, to each of them:
\begin{align}
dV^{i,N}_t  =  \frac{1}{N} \sum_{j=1}^N \psi(X^{i,N}_t - X^{j,N}_t)(V^{j,N}_t - V^{i,N}_t) dt + \sigma(X^{i,N}_t, V^{i,N}_t) \circ dB^i_t.
\label{pert1}
\end{align}
This setting typically appears in the propagation of chaos framework. The flocking behavior for \myref{pert1} has been studied in \cite{ha_lee}. The mean-field limit as $N$ goes to infinity is considered in \cite{bolley}, in the case of a constant diffusion coefficient $\sigma(x,v) = \sqrt{D} Id$, and more recently in \cite{choi-salem2} for $\sigma(x,v) = {\cal R}(v)$ a "truncation function" of the speed.
Note that, when presenting new models, we insist on introducing noise in Stratonovich form, since it is the most physically relevant form.

Another kind of perturbation might emerge from the environment in which the individuals evolve. In this case, we add some common noise dragged by a Wiener process $dW = \sum_k \sigma_k dW^k$:
\begin{align}
dV^{i,N}_t  =  \frac{1}{N} \sum_{j=1}^N \psi(X^{i,N}_t - X^{j,N}_t)(V^{j,N}_t - V^{i,N}_t) dt + \sum_k \sigma_k(X^{i,N}_t, V^{i,N}_t) \circ dW^k_t.
\label{pert2}
\end{align}
A version of \myref{pert2}, with a diffusion coefficient of the form $\sigma(x,v) = D(v-v_e)$ for some constant $v_e \in \mathbb{R}^d$, is studied in \cite{ahn_mi_ha}.

Lastly, one may consider that the weight function $\psi$ modeling the interaction between individuals is perturbed into $\tilde \psi = \psi + d\xi$, where $\xi$ is some space-dependent Wiener process given by $d\xi = \sum_k \phi_k d\beta^k$, leading to
\begin{align}
dV^{i,N}_t  = &  \frac{1}{N} \sum_{j=1}^N \psi(X^{i,N}_t - X^{j,N}_t)(V^{j,N}_t - V^{i,N}_t) dt  \nonumber
\\
& \hspace{10mm} + \frac{1}{N} \sum_{j=1}^N \sum_k \phi_k(X^{i,N}_t - X^{j,N}_t) (V_t^{j,N} - V_t^{i,N}) \circ d\beta^k_t.
\label{pert3}
\end{align}
The mean-field limit and flocking for \myref{pert3} is looked upon in \cite{choi-salem} and more recently in \cite{jung-ha} in the particular case where the perturbation $\xi$ does not depend on $x$:  $d\xi = \sqrt{2 \sigma} d\beta_t$.
\vspace{3mm}

In this paper, we focus on the mean-field limit of these particle systems. Namely, we intend to extend the results mentioned above by studying the behavior of the empirical measure 
$$\mu^N = \frac{1}{N} \sum_{i=1}^N \delta_{(X^{i,N}, V^{i,N})} $$ 
as $N$ goes to infinity, for general stochastic Cucker-Smale model of the form \myref{pert1}, \myref{pert2} or \myref{pert3} (or combinations of these models). Let us keep the notions of convergence a little vague for a moment, in order to give a quick overview of the results to come: we will show for instance that, for \myref{pert3}, under the assumptions
\begin{align}
\sum_k \| \phi_k \|^2_\infty < \infty, \; \; \; \sum_k \| \phi_k \|^2_{\text{lip}} < \infty,
\label{ass_phi}
\end{align}
where $\| \phi \|_{lip} = \sup_{x \neq y} \frac{ |\phi(x)-\phi(y)|}{|x-y|}$, provided that $\mu_0^N \to \mu_0$, the (random) empirical measure $\mu^N_t$ converges in law, up to a subsequence, to some $\mu_t$ which is a weak solution of the expected limiting stochastic PDE
\begin{align}
& d\mu_t + v \cdot \nabla_x \mu_t dt + \nabla_v \cdot \left( F[\mu_t] \mu_t \right) dt + \sum_k \nabla_v \cdot \left( F_k[\mu_t] \mu_t \right) \circ d\beta_t^k = 0,
\label{cs_spde}
\end{align}
with
\begin{align*}
& F[\mu](x,v) = \int \psi(x-y)(w-v) d\mu(y,w), \; \; \; F_k[\mu](x,v) = \int \phi_k(x-y)(w-v) d\mu(y,w) .
\end{align*}
It is of some interest to note here that the noise added in Stratonovich form in \myref{pert3} directly translates into the expected conservative form \myref{cs_spde} for the limiting equation, which emphasizes the physical relevance of Stratonovich's integration over Itô's.

Regarding the flocking phenomenon, the method developed in \mycite{choi-salem} could in fact be easily extended to the model \myref{cs_spde}. Given a solution $\mu=(\mu_t)_{t \ge 0}$ of \myref{cs_spde}, the average velocity $\bar v_t~=~\int v d\mu_t$ is conserved over time. Assuming that 
$$\psi_m := \min_x \psi(x) > 0, \hspace{15mm} \sum_k \| \phi_k \|^2_\infty < \infty$$
 and denoting
$$E_t = \int_{\mathbb{R}^{2d}} |v - \bar v_t|^2 d\mu_t(x,v)$$
calculations easily lead to
$$
\frac{d}{dt} \E[E_t] \le -2 \left(\psi_m - 4 \sum_k \| \phi_k \|^2_\infty \right) \E[ E_t ].
$$
Therefore, under the condition $\psi_m > 4 \sum_k \| \phi_k \|^2_\infty$, the model \myref{cs_spde} exhibits a flocking behavior in the sense that $\E [ E_t ] \to 0$ exponentially fast as $t$ goes to infinity.
\vspace{3mm}

Under the same assumptions \myref{ass_phi}, the "strong" mean-field convergence $\mu_t^N \to \mu_t$ (see Theorem \ref{chap2-thm2} below for details) is obtained for the whole sequence if we consider "truncated velocities" in the perturbative term, that is a model given by
\begin{align}
dV^{i,N}_t  = &  \frac{1}{N} \sum_{j=1}^N \psi(X^{i,N}_t - X^{j,N}_t)(V^{j,N}_t - V^{i,N}_t) dt  \nonumber
\\
& \hspace{10mm} + \frac{1}{N} \sum_{j=1}^N \sum_k \phi_k(X^{i,N}_t - X^{j,N}_t) {\cal R}(V_t^{j,N} - V_t^{i,N}) \circ d\beta^k_t
\label{pert4}
\end{align}
where ${\cal R} : \mathbb{R}^d \to \mathbb{R}^d$ is smooth and compactly-supported, similarly to the case considered in \cite{choi-salem2}. We are in fact allowed slightly more general truncation functions, as will be detailed later on in section \ref{detailed_assumptions}.
\vspace{3mm}

Let  $(\Omega, {\cal F}, ({\cal F}_t)_{t \ge 0} , \Proba)$ be a filtered probability space, and let $\beta, (B^i)_{i \ge 1}$ be independent, respectively  $\mathbb{R}$-valued and $\mathbb{R^d}$-valued $({\cal F}_t)$-brownian motions on $\Omega$, starting from $0$.
Throughout the rest of this paper, we extend our study to a stochastic interacting particle system in $\mathbb{R}^d$ of the general mean-field form
\begin{align}
& dX^{i,N}_t = B[\mu_t^N](X^{i,N}_t) dt + C[\mu_t^N](X^{i,N}_t) \circ d \beta_t + \sigma(X^{i,N}_t) \circ dB^i_t , \label{model}
 \\
 & i \in \{1, \ldots , N \} , \nonumber
\end{align}
where
\begin{align}
\mu_t^N = \frac{1}{N} \sum_{i=1}^N \delta_{X^{i,N}_t}, && B[\mu](x) = \int b(x,y) d\mu(y), && 
C[\mu](x) = \int c(x,y) d\mu(y)
\label{B}
\end{align}
for some coefficients $b, c : \mathbb{R}^d \times \mathbb{R}^d \to \mathbb{R}^d$ and $\sigma : \mathbb{R}^d \to \mathbb{R}^{d \times d}$ . The particles in \myref{model} are subject to two noises of different nature: some individual noise dragged by $B^i_t$ and some common noise dragged by $\beta_t$. The case $c(x,y) = c(x)$ corresponds to a noisy environment (as in \myref{pert2}) whereas the case $c(x,y) = c(x-y)$ corresponds to a noisy interaction (as in \myref{pert3}).

For simplicity purposes, from this point on we choose to only consider a "one-dimensional" common noise $c(x,y) \circ d\beta_t$. It may of course be replaced with a more general $\sum_{k=1}^\infty c_k(x,y) \circ d\beta^k_t$. The results presented in this paper will still hold, provided essentially that the assumptions made here on $c$ are satisfied by all $c_k$, with constants which are square-summable over $k$, as suggested in \myref{ass_phi}.
\vspace{3mm}

In view of usual stochastic mean-field results, it is natural to expect that the limiting equation for the empirical measure $\mu_t^N$ associated to \myref{model} as $N$ goes to infinity is given by
\begin{align}
 d\mu_t + \nabla \cdot ( B[\mu_t] \mu_t ) dt + \nabla \cdot ( C[\mu_t] \mu_t ) \circ d\beta_t + \frac{1}{2} \nabla \cdot ( \text{Tr}(\nabla \sigma \sigma^T ) \mu_t ) dt  
= \frac{1}{2} \nabla \cdot ( \nabla \cdot ( \sigma \sigma^T \mu_t) ) dt
\label{chap2-spde}
\end{align}
where we have used the slight abuse of notation:
\begin{align}
\Big( \text{Tr}(\nabla \sigma \sigma^T) \Big)_i = \text{Tr} ( (\nabla \sigma_i) \sigma^T) 
= \sum_{k, l = 1}^d (\partial_k \sigma_{i,l} ) \sigma_{k,l} .
\label{trace}
\end{align}

Due to the driving noise $\beta_t$ which is common to all particles, \myref{chap2-spde} is an SPDE, so that the limiting measure $(\mu_t)_{t \ge 0}$ is still a stochastic process. The individual noises $\sigma dB^i_t$ are expected to average into the elliptic operator $\frac{1}{2} \nabla \cdot (\nabla \cdot (\sigma \sigma^T . ) )$. 
The first order operator 
$\frac{1}{2} \nabla \cdot ( \text{Tr}(\nabla \sigma \sigma^T ) . )$ only results from the correction from Stratonovich to Itô integration. In the particular case $\sigma(x) \equiv \sigma Id$, we are simply left with
\begin{align*}
& d\mu_t + \nabla \cdot ( B[\mu_t] \mu_t ) dt + \nabla \cdot ( C[\mu_t] \mu_t ) \circ d\beta_t 
 = \frac{\sigma^2}{2} ( \Delta \mu_t ) dt .
\end{align*}

The mean-field limit of the particle system \myref{model} is well known and established when the coefficients $b$, $c$ and $\sigma$ are globally Lipschitz-continuous (see e.g \cite{flandoli}). 
In this article, we want to consider Cucker-Smale perturbations of the form \myref{pert1}, \myref{pert2} and \myref{pert3}. This corresponds to 
$Z_t^{i,N}=(X_t^{i,N},V_t^{i,N})$ satisfying \myref{model} in $\mathbb{R}^{2d}$ with coefficients of the form
\begin{equation}
b((x,v);(y,w)) = \left(
\begin{array}{c}
v \\
\psi(x-y)(w-v)
\end{array}
\right)
\; \; \; \;
c((x,v) ; (y,w) ) =
\left(
\begin{array}{c}
0 \\
\phi(x-y)(w-v)
\end{array}
\right)
\label{chap2-cucker_smale}
\end{equation}
which, when $\psi$ and $\phi$ are globally Lipschitz-continuous, are only locally Lipschitz-continuous.~~\\ 
This leads to additional difficulties compared to the "globally Lipschitz" case. A classical way to deal with such difficulties is to introduce suitable stopping times. In the case considered here, the problem is more difficult since the non-linear terms in equation \myref{chap2-spde} depend on the trajectories of all the particles. This requires to stop every particle at once, leading us to essentially derive estimates on 
\begin{equation}
\sup_{i \in \{1, \ldots N\}} \sup_{t \in [0,T]} |X^{i,N}_t|
\label{the_bound}
\end{equation}
as made clear in Proposition \ref{comparison} and developed in section \ref{chap2-strong}. The bound \myref{the_bound} is the crucial tool in \cite{choi-salem} for instance, where it is derived from a stochastic Gronwall inequality that relies on the simple linear form of the noise. In our case where the noise is more complex, this bound can be obtained through the use of exponential moments for the particles, using a method similar to the one suggested in \cite{bolley}.
This is dealt with in more details in section \ref{section_chara2}.
\vspace{3mm}

Under the assumption that the coefficients $b$, $c$ and $\sigma$ are only locally Lipschitz-continuous and sublinear, we prove the convergence in law (up to a subsequence) of the empirical measure associated to \myref{model} to a weak solution of the limiting SPDE \myref{chap2-spde}. In a more restrictive setting, 
considering only common noise, 
requiring boundedness for $c$ and additional assumptions regarding the growth of local Lipschitz norms of the coefficients, this weak convergence is turned into a strong $L^p(\Omega)$ convergence and the propagation of chaos is established. Precise assumptions and results are stated in section \ref{results} below.

Note that \myref{model} and \myref{chap2-spde} have only been given in the (heuristical) Stratonovich form. In section \ref{ito_form}, we shall determine the corresponding Itô forms and derive a proper definition for solutions of \myref{model} and particularly \myref{chap2-spde} (see Definition \ref{solution}).

\end{subsection}

\begin{subsection}{Main results.} \label{results}

In the rest of this paper, ${\cal P}(E)$ shall denote the set of probability measures on some space $E$. ~~\\
The results presented here along with their proofs involve some considerations regarding ~~\\
Wasserstein spaces 
${\cal P}_p(E)$. 
\begin{define} \label{Wasserstein}
Given $(E,\| . \|)$ a separable Banach space and $p \ge 1$, the $p$th-Wasserstein space
\begin{align*}
{\cal P}_p(E) = \Big\{ \mu \in {\cal P}(E), \; \int_{x \in E} \|x\|^p d\mu(x) < \infty \Big\}
\end{align*}
is equipped with the distance
\begin{align*}
& W_p[ \mu, \nu] = \Big( \inf_{\pi \in \Pi(\mu,\nu)} \int_{x^1, x^2 \in E} \| x^1-x^2 \|^p d\pi(x^1,x^2) \Big)^{1/p}, 
\end{align*}
where
\begin{align*}
\Pi(\mu,\nu) = \Big\{ \pi \in {\cal P}(E^2), \;  \int_{x^2 \in E} \pi(. , dx^2) = \mu \text{ and }
\int_{x^1 \in E} \pi(dx^1 , .)  = \nu
\Big\} .
\end{align*}
\end{define} 
In the rest of this paper, we shall sometimes use the notation $A(z) \lesssim B(z)$ to signify that there exists a constant $C > 0$ independent of the variable $z$ considered such that $A(z) \le C B(z)$ for all $z$.
Defining the Stratonovich corrective terms (see section \ref{ito_form})
\begin{align}
& s_1(x,y,z) = \frac{1}{2} \nabla_x c(x,y) c(x,z) + \nabla_y c(x,y) c(y,z) , \label{s_1}
\\
& S_2(x) = \text{Tr} ( (\nabla \sigma_i) \sigma^T)  \label{S_2}
= \sum_{k, l = 1}^d (\partial_k \sigma_{i,l} ) \sigma_{k,l},
\end{align}
we shall first make the following assumptions on the coefficients of \myref{model}:

\begin{assume}[Sublinearity] \label{hyp1}
\begin{align*}
 &  | b(x,y) | \lesssim  1 + |x| + |y|, \hspace{10mm} |c(x,y)| \lesssim 1+ |x| + |y|,  \hspace{10mm} 
   |\sigma(x)| \lesssim 1+ |x|   \\
 & |s_1(x,y,z) | \lesssim 1 + |x| + |y| + |z|, \hspace{10mm}  |S_2(x)| \lesssim 1+ |x| .
\end{align*}
\end{assume}

\begin{assume}[Locally Lipschitz] \label{hyp2}
\begin{align*}
b,c,\sigma, \nabla c, \nabla \sigma \text{ are locally Lipschitz-continuous}.
\end{align*}
\end{assume}
In this rather general setup, the local Lipschitz-continuity alone is not enough to ensure "standard" estimates of the form
\begin{align*}
\E \Big[ W^2_2 [ \mu_t^N, \mu_t^{M}] \Big] \le C W^2_2[ \mu_0^{N} , \mu_0^{M}]
\end{align*}
and we are not able to establish the "strong" convergence of the particle system. Instead, we rely on compactness arguments to prove the following weak mean-field limit result.

\begin{theorem} \label{chap2-thm1}
Let $T > 0$ and ${\cal C}:=C([0,T] ; \mathbb{R}^d)$ equipped with $\| x \|_\infty = \sup_{t \in [0,T]} |x_t|$ .
\\
Suppose that Assumptions \ref{hyp1} and \ref{hyp2} are satisfied. Let $\mu_0 \in {\cal P}(\mathbb{R}^d)$ such that
$$
\int |x|^{2+\delta} d\mu_0(x) < \infty \text{ for some } \delta > 0.
$$
 Let $(X_t^{i,N})_{t \ge 0}^{i=1, \ldots , N}$ be a solution of \myref{model} and $\mu^N \in {\cal} {\cal P}({\cal C})$ the associated empirical measure. ~~\\
Provided that
$$
  \mu_0^{N} \to \mu_0 \text{ in } {\cal P}_{2}(\mathbb{R}^d) \; \; \text{ and } \; \;
  \sup_N \int |x|^{2 + \delta} d\mu_0^N(x) < \infty ,
$$
there exists a subsequence 
$(\mu^{N'})_{N'}$ such that 
$$
\mu^{N'} \to \mu \text{ in law, in } {\cal P}_2({\cal C})
$$
and $\mu$ is 
a martingale solution of \myref{chap2-spde} (in the sense of \cite{da_prato}, Chapter 8): 
there exists some other probability space $(\widetilde \Omega, \widetilde {\cal F}, \widetilde \Proba)$ equipped with a brownian motion $\widetilde \beta$ such that $\mu$ satisfies the assumptions of Definition \ref{solution} below on $\widetilde \Omega$.
\end{theorem}

This weak convergence can be strengthened into a strong convergence for compactly supported initial measures, under some more restrictive assumptions on the coefficients.

First, we shall only consider the case of common noise (adding individual noises would require additional work, see Remark \ref{work}).
\begin{assume}[Common noise only] \label{hyp_common}
\begin{align*}
\sigma = 0 .
\end{align*}
\end{assume}
In this case, the limiting SPDE \myref{chap2-spde} becomes a stochastic conservation equation:
\begin{align}
d\mu_t + \nabla \cdot ( B[\mu_t] \mu_t ) dt + \nabla \cdot ( C[\mu_t] \mu_t ) \circ d\beta_t 
= 0.
\label{conservation}
\end{align}
Non-linear stochastic conservation equations resembling \myref{conservation} (with local non-linearities) have been studied for instance in \cite{souganidis2}, \cite{souganidis1}.
A solution of \myref{conservation} is naturally expected to be "of the transport form" $\mu = (X^\mu)^* \mu_0$, i.e $\mu$ is given by the push-forward measure of the initial data by the (non-linear) stochastic characteristics
\begin{equation*}
\left\{
\begin{array}{l}
 dX_t^\mu(x) = B[\mu_t](X^\mu_t(x)) dt + C[\mu_t](X^\mu_t(x)) \circ d\beta_t,
\\
 X_0^\mu(x) = x \in \mathbb{R}^d .
 \end{array}
 \right.
\end{equation*}
A precise statement on measures of the transport form is made in Definition \ref{transport_form}. Let us make some additional assumptions on the coefficients:

\begin{assume}[Sublinear drift, bounded diffusion coefficient] \label{hyp1'}
\begin{align*}
  & | b(x,y) | \lesssim  1 + |x| + |y|, \\
  & |s_1(x,y,z) | \lesssim 1 + |x| + |y| + |z|, \\
  & |c(x,y)| \lesssim 1, \nonumber 
\end{align*}
\end{assume}

\begin{assume}[Growth of the local Lipschitz constants] \label{hyp2'} 
\begin{align*}
& |b(x,y) - b(x',y')| \lesssim L_b(x,y,x',y')  \Big( |x-x'| + |y-y|' \Big), \\
& |s_1(x,y,z) - s_1(x',y',z') | \lesssim L_s(x,y,z,x',y',z') \Big( |x-x'| + |y-y'| + |z-z'| \Big),
\\
& |c(x,y) - c(x',y') | \lesssim L_c(x,y,x',y') \Big( |x-x'| + |y-y'| \Big).
\end{align*}
where, for some $\theta \in (0,1)$
\begin{align*}
&  L_b(x,y,x',y') =  1 + |x|^{2 \theta} + |y|^{2 \theta} + |x'|^{2 \theta} + |y'|^{2 \theta},
\\
& L_s(x,y,z,x',y',z') =  1 + |x|^{2 \theta} + |y|^{2 \theta} + |z|^{2 \theta} + |x'|^{2 \theta} + |y'|^{2 \theta} + |z'|^{2 \theta} ,
\\
& L_c(x,y,x',y') = 1 + |x|^\theta + |y|^\theta + |x'|^\theta + |y'|^\theta .
\end{align*}
\end{assume}

\begin{theorem} \label{chap2-thm2} Let $T > 0$, $p \ge 2$ and ${\cal C}:=C([0,T] ; \mathbb{R}^d)$. ~~\\
Suppose that Assumptions \ref{hyp_common}, \ref{hyp1'} and \ref{hyp2'} are satisfied. With the same notations as before, provided that $\mu_0^{N}$  is uniformly supported in some compact set $K \subset \mathbb{R^d}$ and  $\mu_0^{N} \to \mu_0 \text{ in } {\cal P}_p(\mathbb{R}^d)$,
we have the convergence
$$
\mu^N \to \mu \text{ in } L^p(\Omega ; {\cal P}_p({\cal C}) ),
$$
where $\mu = (\mu_t)_{t \in [0,T]}$ is the unique solution of the transport form of \myref{conservation}, in the sense of Definitions \ref{solution} and \ref{transport_form} below.
\end{theorem}
Finally, let us complete this last statement by presenting a result of (conditional) propagation of chaos similar to the one formulated in \cite{flandoli}.

\begin{theorem}  \label{thm3}
Let $T > 0$, $p \ge 2$ and ${\cal C}:=C([0,T] ; \mathbb{R}^d)$.~~\\
Suppose that Assumptions \ref{hyp_common}, \ref{hyp1'} and \ref{hyp2'} are satisfied, and let $({\cal F}^\beta_t)_{t \in [0,T]}$ denote the canonical filtration associated with $\beta$. Given $\mu_0 \in {\cal P}(\mathbb{R}^d)$ supported in some compact set $K \subset \mathbb{R}^d$, let us introduce
\vspace{-2mm}
$$
(\xi_0^i)_{i \ge 1} \text{ i.i.d, ${\cal F}^\beta_0$-measurable, $\mathbb{R}^d$-valued random variables with law $\mu_0$}.
$$
Let $(X_t^{i,N})_{t \ge 0}^{i=1, \ldots , N}$ be the solution of \myref{model} with the initial conditions $X^{i,N}_0 = \xi_0^i$, and let $\mu^N \in {\cal} {\cal P}({\cal C})$ be the associated empirical measure. Then we have the convergence
$$
\mu^N \to \mu \text{ in } L^p(\Omega ; {\cal P}_p({\cal C}) ),
$$
where $\mu = (\mu_t)_{t \in [0,T]}$ is the unique solution of the transport form of \myref{conservation}, in the sense of Definitions \ref{solution} and \ref{transport_form} below. Additionally, for all $r \ge 1$ and $\phi_1, \ldots, \phi_r \in C_b({\cal C})$ we have
$$
\E \left[ \phi_1(X^{1,N}) \ldots \phi_r(X^{r,N}) | {\cal F}_T^\beta \right] \to \prod_{i=1}^r \langle \phi_i, \mu \rangle \text{ in } L^1(\Omega) . \label{chaos}
$$
Finally, for all $i \ge 1$, let $X^i$ be the solution of
$$
\left\{
\begin{array}{l}
 dX_t^i = B[\mu_t](X^i_t) dt + C[\mu_t](X^i_t) \circ d\beta_t,
\\
 X_0^i = \xi_0^i.
 \end{array}
 \right.
$$
Then the limiting measure $\mu \in {\cal P}({\cal C})$ is a version of the conditional law 
${\cal L}(X^i | {\cal F}_T^\beta )$ and we have the convergence
$$
X^{i,N} \to X^i \text{ in } L^p(\Omega ; {\cal C}).
$$

\end{theorem}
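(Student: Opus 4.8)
The plan is to derive all three conclusions from the strong convergence result of Theorem \ref{thm2}, after replacing the deterministic initial data used there by the i.i.d. sampling considered here. First I would record the convergence of the empirical measure $\mu^N \to \mu$ in $L^p(\Omega ; \mathcal{P}_p(\mathcal{C}))$. Since each $\xi_0^i$ lies in the compact set $K$ almost surely, the random initial measures $\mu_0^N = \frac{1}{N}\sum_{i=1}^N \delta_{\xi_0^i}$ are uniformly supported in $K$, and the Wasserstein law of large numbers for i.i.d. compactly supported samples gives $\E[W_p^p(\mu_0^N, \mu_0)] \to 0$. As the initial data are $\mathcal{F}_0$-measurable, hence independent of $\beta$, I would condition on $\mathcal{F}_0$ and apply Theorem \ref{thm2} to a fixed admissible configuration; concretely, its proof furnishes an estimate $\E[W_p^p(\mu^N, \mu)] \lesssim \E[W_p^p(\mu_0^N, \mu_0)] + \varepsilon_N$ with $\varepsilon_N \to 0$ and constants depending only on $K$, $T$, $p$, so integrating over the initial randomness yields $\E[W_p^p(\mu^N, \mu)] \to 0$. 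Here $\mu$ is the unique transport-form solution of \myref{conservation} driven by $\beta$ with datum $\mu_0$, which is the same for every realization of the initial data.

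Next I would identify the limit as the conditional law. By well-posedness of the characteristic SDE under Assumptions \ref{hyp1'} and \ref{hyp2'}, the limiting particle satisfies $X^i_t = X^\mu_t(\xi_0^i)$, where $X^\mu$ denotes the stochastic flow of the transport form of $\mu$. Since $\mu = (X^\mu)^* \mu_0$ by Definition \ref{transport_form} and $\xi_0^i \sim \mu_0$ is independent of $\mathcal{F}_T^\beta$ (being $\mathcal{F}_0$-measurable), the conditional law $\mathcal{L}(X^i \mid \mathcal{F}_T^\beta)$ is the push-forward of $\mu_0$ by the $\mathcal{F}_T^\beta$-measurable map $X^\mu$, namely $\mu$ itself; thus $\mu$ is a version of $\mathcal{L}(X^i \mid \mathcal{F}_T^\beta)$. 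In particular the $(X^i)_{i \ge 1}$ are, conditionally on $\mathcal{F}_T^\beta$, i.i.d. with common law $\mu$, so that $\E[\phi_1(X^1)\cdots\phi_r(X^r) \mid \mathcal{F}_T^\beta] = \prod_{i=1}^r \langle \phi_i, \mu\rangle$.

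The crux, and the step I expect to be the main obstacle, is the strong trajectory convergence $X^{i,N} \to X^i$ in $L^p(\Omega ; \mathcal{C})$. Both processes are characteristics sharing the initial point $\xi_0^i$ and differing only through the measure flow ($\mu^N$ against $\mu$) entering $B[\cdot]$, $C[\cdot]$ and the Stratonovich corrector $s_1$. Writing the It� form of section \ref{ito_form} for $D^i_t = X^{i,N}_t - X^i_t$, applying the Burkholder-Davis-Gundy inequality to the $\beta$-martingale part, and splitting each coefficient increment into a \emph{measure part} $(B[\mu^N_s] - B[\mu_s])(X^{i,N}_s)$ and a \emph{spatial part} $B[\mu_s](X^{i,N}_s) - B[\mu_s](X^i_s)$, I would bound the measure part against an optimal coupling of $(\mu^N_s, \mu_s)$ using Assumption \ref{hyp2'}, obtaining a control by $W_p(\mu^N_s, \mu_s)$ up to a growth factor $(1 + |X^{i,N}_s|^{2\theta})$, and the spatial part by the local Lipschitz factor $(1 + |X^{i,N}_s|^{2\theta} + |X^i_s|^{2\theta})|D^i_s|$. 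The difficulty is precisely that these growth factors, coming from the only locally Lipschitz coefficients, obstruct a direct Gronwall closure. The condition $\theta < 1$ together with the uniform moment bounds on the particles — in particular the control of $\sup_{i}\sup_{t \le T}|X^{i,N}_t|$ discussed after \myref{the_bound}, obtained through exponential moments as in \mycite{bolley} — is exactly what makes these factors integrable and lets me absorb them by H�lder's inequality and a stopping-time/Gronwall scheme paralleling the proof of Theorem \ref{thm2}. The source term $\int_0^t W_p^p(\mu^N_s, \mu_s)\, ds \le T\, W_p^p(\mu^N, \mu)$ then vanishes in $L^1(\Omega)$ by the first step, giving $\E[\sup_{t \le T}|D^i_t|^p] \to 0$.

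Finally I would upgrade these facts to the stated conditional propagation of chaos. Since $\phi_i \in C_b(\mathcal{C})$ and $X^{i,N} \to X^i$ in $L^p(\Omega ; \mathcal{C})$, the products $\phi_1(X^{1,N})\cdots\phi_r(X^{r,N})$ converge to $\phi_1(X^1)\cdots\phi_r(X^r)$ in $L^1(\Omega)$ by boundedness and the continuous mapping theorem; as conditional expectation is an $L^1(\Omega)$-contraction, I obtain $\E[\phi_1(X^{1,N})\cdots\phi_r(X^{r,N}) \mid \mathcal{F}_T^\beta] \to \E[\phi_1(X^1)\cdots\phi_r(X^r) \mid \mathcal{F}_T^\beta] = \prod_{i=1}^r \langle \phi_i, \mu\rangle$ in $L^1(\Omega)$, which is the claimed factorization. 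The remaining assertions — that $\mu$ is a version of $\mathcal{L}(X^i \mid \mathcal{F}_T^\beta)$ and the trajectory convergence $X^{i,N} \to X^i$ — have already been obtained in the second and third steps, completing the proof.
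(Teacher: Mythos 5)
Your argument is correct and, for the empirical-measure convergence and the identification of $\mu$ as the conditional law, follows the paper's strategy: random compactly supported initial data plus the comparison/stopping-time machinery of Theorem \ref{thm2}, and the push-forward identity $\mu=(X^\mu)^*\mu_0$ combined with the independence of ${\cal F}_0$ and ${\cal F}_T^\beta$ (which the paper isolates and proves carefully, via approximation by finitely-valued $\xi_0$, as Proposition \ref{conditional_law}). You diverge from the paper on two points. First, for the trajectory convergence $X^{i,N}\to X^i$ you run a direct SDE estimate on $D^i_t=X^{i,N}_t-X^i_t$, whereas the paper uses Proposition \ref{conditional_law} to rewrite $\E\|X^{i,N}-X^i\|_\infty^p$ as $\E\int_K\sup_t|X^{\mu^N}_t(x)-X^\mu_t(x)|^p\,d\mu_0(x)$ and then reuses verbatim the flow estimate already established in section \ref{strong}; the two computations are the same in substance (measure part controlled by $W_p[\mu^N_s,\mu_s]$ on the event that the stopped supports stay in $B(0,R)$, spatial part by the local Lipschitz bound, then Gr\"onwall, then removal of the stopping time via Corollary \ref{coro} and Markov), so this is a matter of packaging. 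Second, and more genuinely different, is your derivation of \myref{chaos}: you deduce it from the trajectory convergence together with the conditional independence of the $X^i$ given ${\cal F}_T^\beta$, using that conditional expectation is an $L^1$-contraction. The paper instead proves \myref{chaos} \emph{before} and independently of the trajectory convergence, via the exchangeability identity \myref{symmetry} and the $A^N+B^N$ decomposition of Flandoli's Theorem 24, which only requires $\mu^N\to\mu$ and costs an explicit $2M^2/N$. Your route is cleaner once $X^{i,N}\to X^i$ is in hand but makes \myref{chaos} logically dependent on the hardest step; the paper's route shows that \myref{chaos} already follows from the mean-field limit alone. Both are valid; just make sure to state and justify the $r$-variable version of the conditional-law identity (the paper's ``slightly tweaked'' Proposition \ref{conditional_law}), since the conditional i.i.d.\ property of $(X^1,\dots,X^r)$ given ${\cal F}_T^\beta$ is exactly what your factorization $\E[\prod_i\phi_i(X^i)\,|\,{\cal F}_T^\beta]=\prod_i\langle\phi_i,\mu\rangle$ rests on.
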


\end{subsection}

\begin{subsection}{Itô form.} \label{ito_form}

Let us now determine the proper Itô form expressions of \myref{model} and \myref{chap2-spde}. Itô's formula gives
\begin{align*}
d \Big[ C[\mu_t^N] (X^{i,N}_t) \Big] & = \frac{1}{N} \sum_j d \Big[ c(X^{i,N}_t,X^{j,N}_t) \Big]
\\
& = \Big( \frac{1}{N} \sum_j \nabla_x c(X^{i,N}_t,X^{j,N}_t) C[\mu^N_t](X^{i,N}_t)  + \nabla_y c(X^{i,N}_t,X^{j,N}_t) C[\mu^N_t](X^{j,N}_t)  \Big) d\beta_t  \\
& \hspace{15mm} + dV^{i,j}_t + dM^{i,j}_t
\end{align*}
where $V^{i,j}$ is a process with bounded variation and
\begin{align*}
dM^{i,j}_t = \frac{1}{N} \sum_j \left( \nabla_x c(X^{i,N}_t, X^{j,N}_t) \sigma(X^{i,N}_t) dB^i_t +
\nabla_y c(X^{i,N}_t, X^{j,N}_t) \sigma(X^{j,N}_t) dB^j_t 
   \right) .
\end{align*}
It follows that the correction from Stratonovich to Itô is given by
\begin{align*}
& C[\mu_t^N](X^{i,N}_t) \circ d\beta_t = C[\mu_t^N](X^{i,N}_t) d\beta_t + S_1[\mu_t^N](X^{i,N}_t) dt \;
\end{align*}
with
\begin{align}
& S_1[\mu](x) = \int \int s_1(x,y,z) d\mu(y) d\mu(z) \label{S_1}
\end{align}
where
\begin{align*}
s_1(x,y,z) = \frac{1}{2} \nabla_x c(x,y) c(x,z) + \nabla_y c(x,y) c(y,z)
\end{align*}
as defined in \myref{s_1}. Similarly, the correction for the individual noise is given by
\begin{align*}
& \sigma(X^{i,N}_t) \circ dB^i_t = \sigma(X^{i,N}_t) dB^i_t + S_2(X^{i,N}_t) dt
\end{align*}
with
\begin{align*}
& S_2(x) = \frac{1}{2} \text{Tr}(\nabla \sigma(x) \sigma^T(x))
\end{align*}
as defined in \myref{S_2}. We may now rewrite the particle system \myref{model} as
\begin{align}
& dX^{i,N}_t = \Big( B[\mu_t^N](X^{i,N}_t) + S[\mu^N_t](X^{i,N}_t) \Big) dt + C[\mu_t^N](X^{i,N}_t) d\beta_t + \sigma(X^{i,N}_t) dB^i_t , \label{ito}
\end{align}
where
\begin{align*}
S[\mu](x) = S_1[\mu](x) + S_2(x) \text{ is defined in \myref{S_1} and \myref {S_2} } . 
\end{align*}
~~\\
As for the SPDE \myref{chap2-spde}, it is to be understood in the following weak sense: for any $\psi \in C^2_c(\mathbb{R}^d)$,
\begin{align*}
d \langle \psi , \mu_t \rangle = \langle ( B[\mu_t] + \frac{1}{2} Tr(\nabla \sigma \sigma^T) ) \cdot \nabla \psi , \mu_t \rangle dt + 
\langle C[\mu_t] \cdot \nabla \psi , \mu_t \rangle \circ d\beta_t + \frac{1}{2} \langle \text{Tr}(\sigma (\nabla^2 \psi) \sigma^T), \mu_t \rangle dt .
\end{align*}
Let us determine the correction corresponding to the Stratonovich term. We have
\begin{align*}
d \Big[ \langle C[\mu_t] \cdot \nabla \psi , \mu_t \rangle \Big]
 =  \langle C[\mu_t] \cdot \nabla \psi , d\mu_t \rangle + \langle d \Big[ C[\mu_t] \cdot \nabla \psi \Big] , \mu_t \rangle .
\end{align*}
On one hand,
\begin{align*}
\langle C[\mu_t] \cdot \nabla \psi , d\mu_t \rangle & = \langle C[\mu_t] \cdot \nabla( C[\mu_t] \cdot \nabla \psi) , \mu_t \rangle d\beta_t + dV^{(1)}_t 
\\
& = \Big( \langle \left( \nabla C[\mu_t] C[\mu_t] \right) \cdot \nabla \psi, \mu_t \rangle +
\langle \nabla^2 \psi \cdot C[\mu_t]^{\otimes 2} , \mu_t \rangle
 \Big) d\beta_t + dV^{(1)}_t 
\end{align*} 
where $V^{(1)}$ is a process with bounded variation.
On the other hand,
\begin{align*}
C[\mu_t] (x) \cdot \nabla \psi(x) = \int \phi(y) d\mu_t(y) = \langle \phi , \mu_t \rangle
\text{ with } \phi(y) = c(x,y) \cdot \nabla \psi(x)
\end{align*}
so that
\begin{align*}
d \Big[ C[\mu_t] \cdot \nabla \psi \Big](x)  & = d \langle \phi, \mu_t \rangle = \langle C[\mu_t] \cdot \nabla \phi , \mu_t \rangle d\beta_t + dV^{(2)}_t(x) \\
 & = \left( \left( \int \nabla_y c(x,y) C[\mu_t](y) d\mu_t(y) \right) \cdot \nabla \psi(x) \right) d\beta_t 
+ dV^{(2)}_t(x) 
\end{align*}
where $V^{(2)}(x)$ is a process with bounded variation.
Combining both expressions, we are led to
\begin{align*}
d \Big[ \langle C[\mu_t] \cdot \nabla \psi , \mu_t \rangle \Big] = \langle 2 S_1[\mu_t] \cdot \nabla \psi + \nabla^2 \psi \cdot (C[\mu_t]^{\otimes 2}) , \mu_t \rangle d\beta_t + dU_t
\end{align*}
where $U_t = \int_0^t \left( dV^{(1)}_s + \langle dV^{(2)}_s, \mu_s \rangle \right)$ is a process with bounded variation.
The correction is therefore given by
\begin{align*}
\langle C[\mu_t] \cdot \nabla \psi , \mu_t \rangle \circ d\beta_t =
\langle C[\mu_t] \cdot \nabla \psi , \mu_t \rangle d\beta_t
+ \Big( \langle S_1[\mu_t] \cdot \nabla \psi, \mu_t \rangle +  \frac{1}{2} \langle \nabla^2 \psi \cdot (C[\mu_t]^{\otimes 2}) , \mu_t \rangle \Big) dt .
\end{align*}
Consequently, the Itô form corresponding to the SPDE \myref{chap2-spde} is exactly
\begin{align}
& d\mu_t + \nabla \cdot \Big( ( B[\mu_t] + S[\mu_t] ) \mu_t \Big) dt + \nabla \cdot (C[\mu_t] \mu_t) d\beta_t
=
\frac{1}{2} \nabla \cdot \nabla \cdot \Big( (\sigma \sigma^T + C[\mu_t] C[\mu_t]^T) \mu_t \Big) dt
\label{spde_ito}
\end{align}
with $S[\mu_t]$ as in \myref{ito}. This allows us to precisely define the notion of solution for \myref{chap2-spde}.

\begin{define} 
\label{solution}
Let $(\Omega, {\cal F}, ({\cal F}_t) , \Proba)$ be a filtered probability space equipped with an $({\cal F}_t)$-brownian motion $\beta$. Let $\mu_0 \in {\cal P}(\mathbb{R}^d)$. 

A measure-valued process $\mu = (\mu_t)_{t \in [0,T]} : \Omega \to {\cal P}(\mathbb{R}^d)^{[0,T]}$ is said to be a solution of the SPDE \myref{chap2-spde} (or equivalently \myref{spde_ito}) with initial value $\mu_0$  when for all $\psi \in C^2_c(\mathbb{R}^d)$, the process $(\langle \psi,\mu_t \rangle)_{t \in [0,T]}$ is adapted with a continuous version and satisfies
\begin{align}
\langle \psi, \mu_t \rangle = \langle \psi, \mu_0 \rangle + \int_0^t \langle \big( B[\mu_s] + S[\mu_s]\big) \cdot \nabla \psi + {\cal A}[\mu_s] \psi , \mu_s \rangle ds + \int_0^t \langle C[\mu_s] \cdot \nabla \psi , \mu_s \rangle d\beta_s,
\label{eq_psi}
\end{align}
where $S[\mu]$ is defined in \myref{ito} and the second order operator ${\cal A}[\mu]$ is given by
\begin{align}
{\cal A}[\mu] \psi = \frac{1}{2} \sum_{i,j} \Big( \sum_k \sigma_{i,k} \sigma_{j,k} + C_i[\mu] C_j[\mu]  \Big)  \partial^2_{i,j} \psi .
\label{calA}
\end{align}

\end{define}

\begin{remark}
Comparing the particle system \myref{ito} and the SPDE \myref{spde_ito} expressed in Itô form, we see that the correction from Stratonovich to Itô integration adds some "virtual" interaction kernel $S[\mu]$ to the system. On the SPDE \myref{spde_ito}, it additionally results in the  operator ${\cal A}[\mu]$ which is of order $2$ and consequently is not "visible" on the particle system \myref{ito}.
\end{remark}

\end{subsection}

\end{section}


\begin{section}{Weak mean-field convergence} \label{chap2-2}

\begin{subsection}{Properties of the coefficients.}
In the entirety of section \ref{chap2-2}, we shall assume that Assumptions \ref{hyp1} and \ref{hyp2} are satisfied.
\vspace{3mm}

Assumption \ref{hyp2} guarantees that the coefficients of the SDE system expressed in Itô form \myref{ito} are locally Lipschitz-continuous, which classically provides the local existence and uniqueness of solutions. The sublinearity Assumption \ref{hyp1} immediately results in
\begin{align}
|B[\mu](x)|, \; \; |C[\mu](x)|,  \; \; | S[\mu](x) |  \lesssim  \Big(1+  |x| + \int |y| d\mu(y) \Big) \label{sublinear_B} .
\end{align}
Of course, Assumptions \ref{hyp1} and \ref{hyp2} are satisfied in the classical "globally-Lipschitz" setup when
\begin{align*}
| \nabla b |, |\nabla c|, | \nabla \sigma | \lesssim 1, \; \; \; 
\text{$\nabla c$ and $\nabla \sigma$ locally Lipschitz-continuous}.
\end{align*}
 Most importantly, we are indeed allowed to consider coefficients with the Cucker-Smale form: let $b$ and $c$ be given by \myref{chap2-cucker_smale}. Assuming that $\psi$, $\phi$ are bounded and locally Lipschitz-continuous, $b$ and $c$ are clearly sublinear. Moreover, a simple calculation gives, with $z_i = (x_i,v_i)$,
\begin{equation*}
s_1(z_1,z_2,z_2) = \left(
\begin{array}{c}
0
\\
- \phi(x_1-x_2) \phi(x_1-x_3) (v_3-v_1) + \phi(x_1-x_2) \phi(x_2-x_3) (v_3-v_2)
\end{array}
\right)
\end{equation*}
which is sublinear as well. 
\end{subsection}

\begin{subsection}{Estimates for the particle system.}
Firstly, Assumption \ref{hyp1} naturally guarantees some moment estimates for the solutions of \myref{model}.

\begin{proposition}[Moment estimates, global existence] \label{chap2-moments}
~~\\
Let $T > 0$, $q \ge 2$ and $\mu_0^N = \frac{1}{N} \sum_i \delta_{X_0^{i,N}}$ be such that $\int |x|^q d\mu_0^N(x) < \infty$. Then the SDE system \myref{model} (or equivalently \myref{ito}) has a unique solution defined on $[0,T]$, which satisfies, 
\begin{align}
& \E \Big[ \sup_{t \in [0,T]} \int |x|^q d\mu^N_t(x) \Big] \lesssim 1 + \int |x|^q d\mu_0^N(x)
\end{align}
and for all $i \in \{1, ... , N \}$,
\begin{align}
& \E \Big[ \sup_{t \in [0,T]} |X^{i,N}_t|^q \Big] \lesssim  1 + \Big( |X^{i,N}_0|^q + \int |x|^q d\mu_0^N(x)  \Big).
\label{big_bound}
\end{align}
The constants involved in $\lesssim$ depend on $T$ and $q$ only.
\end{proposition} 

\begin{proof}[Proof]
The assumptions guarantee that the coefficients of the SDE \myref{ito} are locally Lipschitz-continuous, which provides the local existence and uniqueness of the solution. To simplify the notation, we shall consider that all stochastic integrals are well defined: for a more rigorous framework, one should consider the solution of the truncated equations with a suitable stopping time ; classically, estimate \myref{big_bound} (uniform on the truncation) then ensures that the solution is globally defined. Using \myref{sublinear_B}, one can write
\begin{align}
|X_t^{i,N}|^q & \lesssim  |X_0^{i,N}|^q + \int_0^t ( |B[\mu_s^N](X_s^{i,N})|^q + |S[\mu_s^N](X_s^{i,N})|^q  )ds + |M^i_t|^q \nonumber \\
& \lesssim  1 + |X_0^{i,N}|^q + \int_0^t ( |X_s^{i,N}|^q + \int |x|^q d\mu_s^N ) ds + |M_t^i|^q 
\label{nobar}
\end{align}
where $M^i_t = \int_0^t C[\mu_s^N](X^{i,N}_s) d \beta_s + \int_0^t \sigma(X^{i,N}_s) dB^i_s$. Taking the mean over $i$, and letting 
$$\overline{|X_t|^q}~=~\int |x|^q d\mu^N_t$$
we are led to
\begin{align*}
\overline{|X_t|^q}  \lesssim 1 + \overline{|X_0|^q} + \int_0^t \overline{|X_s|^q} ds + \frac{1}{N} \sum_i |M^i_t|^q 
\end{align*}
and therefore,
\begin{align}
\sup_{\sigma \in [0,t]} \overline{|X_\sigma|^q} \lesssim 1 +
\overline{|X_0|^q} + \int_0^t \sup_{\sigma \in [0,s]} \overline{|X_\sigma|^q} ds 
+ \frac{1}{N} \sum_i \sup_{\sigma \in [0,t]} |M^i_\sigma|^q .
\label{bar}
\end{align}
Burkholder-Davis-Gundy's inequality from \cite{bdg} states that $\E \Big[ \sup_{\sigma \in [0,t]} |M^i_\sigma|^q \Big]_t \lesssim \E\left( \Big[ M^i \Big]_t^{q/2} \right)$.~~\\
Using \myref{sublinear_B},
\begin{align*}
\Big[ M^i \Big]_t =  \int_0^t |C[\mu_s^N](X_s^{i,N})|^2 + |\sigma(X_s^{i,N})|^2 ds \lesssim 1 + \int_0^t |X^{i,N}_s|^2 + \overline{|X_s|^2} ds
\end{align*}
hence $ \Big[ M^i \Big]_t^{q/2} \lesssim 1 + \int_0^t|X^{i,N}_s|^q + \overline{|X_s|^q} ds$.  Coming back to \myref{bar},
\begin{align}
\E \Big[ \sup_{\sigma \in [0,t]} \overline{|X_\sigma|^q} \Big] \lesssim 1 + \overline{|X_0|^q} + \int_0^t \E \Big[ \sup_{\sigma \in [0,s]} \overline{|X_\sigma|^q} \Big] ds 
\end{align}
and we use Grönwall's Lemma to get the first estimate of Proposition \ref{chap2-moments}. We can now get back to \myref{nobar} to get
\begin{align*}
\sup_{\sigma \in [0,t]} | X_t^{i,N}|^q \lesssim 1 + |X_0^{i,N}|^q + \int_0^t \sup_{\sigma \in [0,s]} |X_s^{i,N}|^q ds +  \sup_{\sigma \in [0,T]} \overline{|X_\sigma|^q} + \sup_{\sigma \in [0,t]} |M^i_t|^q.
\end{align*}
Using the previously established estimate and Burkholder-Davis-Gundy's inequality once again,
\begin{align*}
\E \left[ \sup_{\sigma \in [0,t]} | X_t^{i,N}|^q \right] \lesssim  \left( 1+  |X_0^{i,N}|^q + \overline{|X_0|^q} \right) + 
\int_0^t \E \left[ \sup_{\sigma \in [0,s]} |X_s^{i,N}|^q \right] ds
\end{align*}
and we may apply Grönwall's Lemma to obtain the second estimate of Proposition \ref{chap2-moments}.
\end{proof}

\begin{remark}
Given some $\psi \in C^2(\mathbb{R}^d)$ with $| \nabla \psi|, |\nabla^2 \psi| \lesssim1$, Itô's formula gives
\begin{align*}
d\psi(X^{i,N}_t) & = \nabla \psi(X^{i,N}_t) \cdot (B[\mu_t^N] + S[\mu_t^N])(X^{i,N}_t) dt + \nabla \psi(X^{i,N}_t) \cdot C[\mu_t^N](X^{i,N}_t) d\beta_t 
\\ & + \nabla \psi(X^{i,N}_t) \cdot \sigma(X^{i,N}_t) dB^i_t + {\cal A}[\mu_t^N](X^{i,N}_t) dt,
\end{align*}
hence taking the mean in $i \in \{1, \ldots, N\}$ we are led to
\begin{align}
\langle \psi, \mu^N_t \rangle & = \langle \psi, \mu^N_0 \rangle + \int_0^t \langle \big( B[\mu^N_s] + S[\mu^N_s]\big) \cdot \nabla \psi + {\cal A}[\mu^N_s] \psi , \mu^N_s \rangle ds + \int_0^t \langle C[\mu^N_s] \cdot \nabla \psi , \mu^N_s \rangle d\beta_s
\nonumber \\
& + \frac{1}{N} \sum_i \int_0^t \nabla \psi(X^{i,N}_s) \cdot \Big( \sigma(X^{i,N}_s) dB^i_s \Big) . \label{this}
\end{align}
Given the bounds on $\E[ \int |x|^2 d\mu_t^N(x) ]$, it is easy to see that the stochastic integrals involved are continuous martingales. Aside from the last term, which is expected to vanish as $N$ goes to infinity, this is exactly the SPDE \myref{spde_ito}.
\end{remark}

Let us now establish some estimates regarding the regularity of solutions of \myref{model}.

\begin{proposition}[Kolmogorov continuity for the particle system] \label{kolmogorov} ~~\\
Let $T > 0$, $q \ge 2$ and $\mu_0^N = \frac{1}{N} \sum_i \delta_{X_0^{i,N}}$ be such that $\int |x|^q d\mu_0^N(x) < \infty$. ~~\\ The following estimate holds uniformly for $t,s \in [0,T]$
\begin{align*}
& \frac{1}{N} \sum_i \E |X_t^{i,N} - X_s^{i,N} |^q \lesssim \Big( 1 + \int |x|^q d\mu_0^N \Big)  |t-s|^{q/2} .
\end{align*}
The constant involved in $\lesssim$ depends on $T$ and $q$ only.
\end{proposition}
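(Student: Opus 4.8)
The plan is to start from the It\^o form \myref{ito} of the particle system and, for $s \le t$, write the increment $X^{i,N}_t - X^{i,N}_s$ as a sum of three contributions: a drift integral involving $(B+S)[\mu^N]$, a common-noise integral $\int_s^t C[\mu^N_r](X^{i,N}_r)\, d\beta_r$, and an individual-noise integral $\int_s^t \sigma(X^{i,N}_r)\, dB^i_r$. Using the elementary inequality $|a+b+c|^q \lesssim |a|^q + |b|^q + |c|^q$, it suffices to bound the $q$-th moment of each of these three pieces separately and then average over $i$.

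For the drift term I would apply Jensen's (equivalently H\"older's) inequality to move the exponent inside the time integral, $\big|\int_s^t (B+S)[\mu^N_r](X^{i,N}_r)\, dr\big|^q \le |t-s|^{q-1} \int_s^t \big|(B+S)[\mu^N_r](X^{i,N}_r)\big|^q\, dr$, and then invoke the sublinearity bound \myref{sublinear_B} to replace the integrand by $1 + |X^{i,N}_r|^q + \int |x|^q\, d\mu^N_r$. Since $|t-s| \le T$, the resulting factor $|t-s|^{q-1}\cdot |t-s| = |t-s|^q$ is controlled by $T^{q/2}\,|t-s|^{q/2}$, which already produces the correct power of $|t-s|$.

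For the two martingale contributions I would use the Burkholder-Davis-Gundy inequality to reduce to the quadratic variation, followed by one more application of Jensen. For the common noise, BDG gives $\E\big|\int_s^t C[\mu^N_r](X^{i,N}_r)\, d\beta_r\big|^q \lesssim \E\big(\int_s^t |C[\mu^N_r](X^{i,N}_r)|^2\, dr\big)^{q/2}$, and Jensen yields $\big(\int_s^t |C|^2\, dr\big)^{q/2} \le |t-s|^{q/2-1}\int_s^t |C|^q\, dr$; combined with \myref{sublinear_B} this produces a bound of order $|t-s|^{q/2}\big(1 + |X^{i,N}_r|^q + \int|x|^q d\mu^N_r\big)$ after integrating. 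The individual-noise integral is handled identically, using $|\sigma(x)| \lesssim 1 + |x|$ from Assumption \ref{hyp1}.

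Finally I would average these three bounds over $i \in \{1, \dots, N\}$. At that point the only quantities left are $\frac1N \sum_i \E \int_s^t |X^{i,N}_r|^q\, dr$ and $\E \int_s^t \!\int |x|^q\, d\mu^N_r\, dr$, both of which are controlled by $\big(1 + \int |x|^q\, d\mu_0^N\big)$ via the moment estimates of Proposition \ref{moments} (using that $\frac1N\sum_i |X_0^{i,N}|^q = \int |x|^q\, d\mu_0^N$). Collecting the powers of $|t-s|$ and absorbing the drift contribution $|t-s|^q$ into $|t-s|^{q/2}$ through $|t-s| \le T$ gives the claimed estimate. I expect no genuine obstacle here, as this is a standard Kolmogorov-type increment bound; the only point requiring care is that the coefficients are nonlinear in the empirical measure, so one must keep track of the mean term $\int |x|^q\, d\mu^N_r$ throughout and rely on the measure-level moment bound of Proposition \ref{moments} rather than purely pointwise control.
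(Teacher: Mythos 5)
Your proposal is correct and follows essentially the same route as the paper: Jensen's inequality on the drift integral, Burkholder--Davis--Gundy plus Jensen on the martingale part(s), the sublinearity bound \myref{sublinear_B}, and the moment estimates of Proposition \ref{moments} after averaging over $i$. The only cosmetic difference is that you split the common and individual noises into two stochastic integrals where the paper treats them as a single martingale $M^i$, which changes nothing.
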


\begin{proof}[Proof]
Again, one can write
\begin{align*}
|X_t^{i,N} - X_s^{i,N}|^q & \lesssim  \Big| \int_s^t (B[\mu_\sigma^N] + S[\mu_\sigma^N])(X_\sigma^{i,N}) d\sigma \Big|^q 
+ |M_t^i - M_s^i |^q 
\\
& \lesssim  |t-s|^{q-1} \int_s^t ( |B[\mu_\sigma^N](X^{i,N}_\sigma)|^q + |S[\mu_\sigma^N](X^{i,N}_\sigma)|^q ) d\sigma 
+ |M_t^i - M_s^i |^q ,
\end{align*}
hence using the estimates from Proposition \ref{chap2-moments},
\begin{align*}
\frac{1}{N} \sum_i \E |X_t^{i,N} - X_s^{i,N}|^q & \lesssim  |t-s|^q  \left( 1 + \int |x|^q d\mu_0^N \right) + 
\frac{1}{N} \sum_i \E |M_t^i - M_s^i |^q  .
\end{align*}
Burkholder-Davis-Gundy's inequality gives
\begin{align*}
\E |M_t^i - M_s^i |^q & \lesssim \E \Big| \Big[ M^i \Big]_t - \Big[ M^i \Big]_s \Big|^{q/2} \\
& \lesssim \E \Big| \int_s^t 1 + \overline{| X^{N}_\sigma|^2} + |X^{i,N}_\sigma|^2 d\sigma \Big|^{q/2}
\lesssim  |t-s|^{q/2-1} \int_s^t 1 + \E [\overline{| X^{N}_\sigma|^q} + |X^{i,N}_\sigma|^q ] d\sigma 
\end{align*}
since $q/2 \ge 1$, hence
\begin{align*}
\frac{1}{N} \sum_i \E |M_t^i - M_s^i |^q  \lesssim |t-s|^{q/2} \left( 1 + \int |x|^q d\mu_0^N \right)
\end{align*}
which concludes the proof
\end{proof}

\end{subsection}

\begin{subsection}{Tightness of measure-valued random variables.} \label{tightness}

In this subsection, we state general results regarding the tightness of random measures, which we shall later apply in our special case.
\vspace{3mm}

Let $(E, \| . \|)$ be a separable Banach space. The space ${\cal P}(E)$ of probability measures on $E$ is equipped with the topology of the weak convergence. More precisely, we shall consider that ${\cal P}(E)$ is equipped with the Lévy-Prokhorov metric, which also makes it a polish space (one may refer to \cite{billing} for details regarding this topology and tightness in general).

\begin{define}
For a random measure $\mu : \Omega \to {\cal P}(E)$, we define the intensity $I(\mu)$ of $\mu$ by
\begin{align*}
\forall f : E \to \mathbb{R} \text{ measurable, bounded },  \; \; \langle f, I(\mu) \rangle = \E[ \langle f , \mu \rangle],
\end{align*}
that is $I(\mu) \in {\cal P}(E)$ is a deterministic probability measure on $E$. 
\end{define}

The following result (mentioned e.g in \cite{chaos} p178) establishes a link between the relative compactness in law of $\mu$ and the tightness of its intensity measure $I(\mu)$.

\begin{proposition} \label{intensity} ~~\\
For a sequence $(\mu^N)_{N \ge 1}$  of random measures on $E$, the two following statements are equivalent.
\begin{enumerate}[i)]
\item The sequence of ${\cal P}(E)$-valued random variables $(\mu^N)_N$ is tight.
\item The sequence $(I(\mu^N))_N$ of measures on $E$ is tight.
\end{enumerate}
\end{proposition}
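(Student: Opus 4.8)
The plan is to establish the two implications separately, the central tool in both being Prokhorov's theorem, which is available since both $E$ and ${\cal P}(E)$ are Polish spaces. Recall that tightness of a family ${\cal S} \subset {\cal P}(E)$ means that for every $\eta > 0$ there is a compact $K \subset E$ with $\nu(E \setminus K) \leq \eta$ for all $\nu \in {\cal S}$, and that by Prokhorov's theorem this is equivalent to the relative compactness of ${\cal S}$ in ${\cal P}(E)$.

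For the implication $i) \Rightarrow ii)$, I would fix $\epsilon > 0$ and use the tightness of $(\mu^N)_N$ to pick a compact ${\cal K} \subset {\cal P}(E)$ with $\Proba(\mu^N \notin {\cal K}) \leq \epsilon$ for all $N$. By Prokhorov, the compactness of ${\cal K}$ in ${\cal P}(E)$ makes it a tight family of measures on $E$, so there is a compact $K \subset E$ with $\nu(E \setminus K) \leq \epsilon$ for all $\nu \in {\cal K}$. Splitting the expectation over $\{\mu^N \in {\cal K}\}$ and its complement then yields
$$
I(\mu^N)(E \setminus K) = \E[\mu^N(E \setminus K)] \leq \epsilon + \Proba(\mu^N \notin {\cal K}) \leq 2\epsilon
$$
uniformly in $N$, which is precisely the tightness of $(I(\mu^N))_N$.

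For the converse $ii) \Rightarrow i)$, which is the substantial direction, I would use the tightness of $(I(\mu^N))_N$ to choose, for each $k \geq 1$, a compact $K_k \subset E$ such that $\E[\mu^N(E \setminus K_k)] = I(\mu^N)(E \setminus K_k) \leq \epsilon \, 2^{-2k}$ for all $N$. Setting $\delta_k = 2^{-k}$, I would define
$$
{\cal K} = \Big\{ \nu \in {\cal P}(E) : \nu(E \setminus K_k) \leq \delta_k \text{ for all } k \geq 1 \Big\}.
$$
This set is tight by construction (given $\eta > 0$, choose $k$ with $\delta_k \leq \eta$) and closed, since each map $\nu \mapsto \nu(E \setminus K_k)$ is lower semicontinuous ($E \setminus K_k$ being open), so that each sublevel set $\{\nu : \nu(E \setminus K_k) \leq \delta_k\}$ is closed; hence ${\cal K}$ is compact. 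A union bound combined with Markov's inequality then gives
$$
\Proba(\mu^N \notin {\cal K}) \leq \sum_{k \geq 1} \Proba\big( \mu^N(E \setminus K_k) > \delta_k \big) \leq \sum_{k \geq 1} \frac{I(\mu^N)(E \setminus K_k)}{\delta_k} \leq \epsilon \sum_{k \geq 1} 2^{-k} = \epsilon
$$
uniformly in $N$, which is exactly the tightness of $(\mu^N)_N$.

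The main obstacle is this converse direction: unlike $i) \Rightarrow ii)$, where one merely unpacks compactness in ${\cal P}(E)$, here one must manufacture a compact subset ${\cal K}$ of ${\cal P}(E)$ out of the uniform control on the intensities alone. The delicate points are the quantitative balancing of the thresholds $\epsilon \, 2^{-2k}$ against the tightness radii $\delta_k = 2^{-k}$, so that the Markov union bound sums to $\epsilon$ while $\delta_k \to 0$ still guarantees tightness of ${\cal K}$, and the verification that ${\cal K}$ is genuinely compact, which rests on Prokhorov's theorem together with the lower semicontinuity of $\nu \mapsto \nu(E \setminus K_k)$ on open sets.
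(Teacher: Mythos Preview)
Your proof is correct and, in the substantial direction $ii) \Rightarrow i)$, follows essentially the same route as the paper: construct a candidate compact in ${\cal P}(E)$ of the form $\{\nu : \nu(K_k^c) \le \delta_k \text{ for all } k\}$ from a summable sequence of intensity bounds, verify its compactness via Prokhorov, and control $\Proba(\mu^N \notin {\cal K})$ by Markov plus a union bound. The only cosmetic differences are your choice of thresholds ($\epsilon\,2^{-2k}$ versus the paper's $4^{-m}$, with the compacts then depending on $\epsilon$) and your explicit justification of closedness via lower semicontinuity of $\nu \mapsto \nu(K_k^c)$, which the paper leaves implicit.

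For the easy direction $i) \Rightarrow ii)$ you take a slightly different path: you argue directly by splitting $\E[\mu^N(K^c)]$ over $\{\mu^N \in {\cal K}\}$ and its complement, whereas the paper observes that the map $\mu \mapsto I(\mu)$ is continuous for weak convergence (since $\nu \mapsto \langle f,\nu\rangle$ is in $C_b({\cal P}(E))$ for $f \in C_b(E)$), so that tightness of $(\mu^N)_N$ transfers to relative compactness, hence tightness, of $(I(\mu^N))_N$ via Prokhorov on both levels. Your argument is more self-contained and avoids the subsequence detour; the paper's is shorter but relies on the reader filling in the Prokhorov-to-Prokhorov step.
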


\begin{proof}[Proof]
Firstly $i)$ clearly implies $ii)$ since
$$\left[ \mu^N \to \mu \text{ in } {\cal P}(E) \text{ weakly on } \Omega \right] \text{ implies } 
\left[ I(\mu^N) \to I(\mu) \text{ weakly on E} \right] .
$$
Let us assume $ii)$: we introduce a sequence $(C_m)_{m \ge 1}$ of compacts of $E$ such that
\begin{align*}
\forall m \ge 1, \; \; \sup_N I(\mu^N)(C_m^c) \le 4^{-m} .
\end{align*}
For a given $\varepsilon > 0$, let us define 
\begin{align}
K_\varepsilon = \left\{ \mu \in {\cal P}(E), \; \forall m \ge 1, \; \mu(C_m^c) \le \varepsilon^{-1} 2^{-m} \right\} .
\end{align}
Prokhorov's theorem on tightness states that $K_\varepsilon$ is a compact of ${\cal P}(E)$ equipped with the Lévy-Prokhorov metric. Now, for all $N \ge 1$, using simply Markov's inequality,
\begin{align*}
\Proba[ \mu^N \notin K_\varepsilon ] \le \sum_m \Proba[ \mu^N(C_m^c) > \varepsilon^{-1} 2^{-m} ]
\le \varepsilon \sum_m I(\mu^N)(C_m^c) 2^m \le \varepsilon .
\end{align*}
\end{proof}
Let us extend this reasoning to the Wasserstein space ${\cal P}_2(E)$ recalled in Definition \ref{Wasserstein}. ~~\\
Firstly, the following convergence criteria is well known.

\begin{proposition} \label{criteria} ~~\\
For a sequence of measure $(\mu^N)_N \in {\cal P}_2(E)^\mathbb{N}$, the following statements are equivalent:
\begin{enumerate}[i)]
\item 
$\mu^N \to \mu \text{ in } {\cal P}_2(E)$
\item 
$\mu^N \to \mu \text{ in } {\cal P}(E) \text{ and } \; \displaystyle{ \limsup_N \int_{\|x\| > R} \| x \|^2 d\mu^N(x) \xrightarrow[R \to \infty]{} 0  }
$
\item
$\langle \phi, \mu^N \rangle \to \langle \phi, \mu \rangle \text{ for all  $\phi$ continuous such that } \phi(x) \le C(1 + |x|^2).
$
\end{enumerate}
\end{proposition}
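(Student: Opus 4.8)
The plan is to separate the ``soft'' equivalence $(ii)\Leftrightarrow(iii)$, which involves only weak convergence together with a truncation argument, from the genuinely metric equivalence $(i)\Leftrightarrow(ii)$, which characterizes Wasserstein convergence as weak convergence plus uniform integrability of the squared norm. Throughout I would fix, for each $R>0$, a cutoff $\chi_R\in C_b(E)$ with $\mathbf{1}_{\|x\|\le R}\le\chi_R\le\mathbf{1}_{\|x\|\le R+1}$, which serves to pass between bounded and quadratically growing test functions.

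For the soft part, consider first $(iii)\Rightarrow(ii)$: testing against bounded continuous $\phi$ yields $\mu^N\to\mu$ in ${\cal P}(E)$, and testing against $\phi(x)=\|x\|^2$ (admissible by the growth bound) yields $\int\|x\|^2 d\mu^N\to\int\|x\|^2 d\mu$. Writing $\int_{\|x\|>R}\|x\|^2 d\mu^N\le\int\|x\|^2 d\mu^N-\int\chi_R\|x\|^2 d\mu^N$ and using weak convergence of the bounded continuous integrand $\chi_R\|x\|^2$ together with the moment convergence, one gets $\limsup_N\int_{\|x\|>R}\|x\|^2 d\mu^N\le\int(1-\chi_R)\|x\|^2 d\mu$, which tends to $0$ as $R\to\infty$ by dominated convergence. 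Conversely, for $(ii)\Rightarrow(iii)$ and $\phi$ continuous with $|\phi(x)|\le C(1+\|x\|^2)$, I would split $\phi=\chi_R\phi+(1-\chi_R)\phi$: the truncated part $\chi_R\phi\in C_b(E)$ converges by weak convergence, while the tail is bounded by $C\int_{\|x\|>R}(1+\|x\|^2)d\mu^N$, which is small uniformly in $N$ for large $R$ thanks to $(ii)$ (the mass $\mu^N(\|x\|>R)$ being controlled by Markov's inequality and the uniform second-moment bound). A standard $\varepsilon/3$ argument then closes $(ii)\Leftrightarrow(iii)$.

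For the metric equivalence, $(i)\Rightarrow(ii)$ is quick: $W_2$-convergence implies weak convergence, and since $W_2[\mu^N,\delta_0]=\big(\int\|x\|^2 d\mu^N\big)^{1/2}$, the reverse triangle inequality gives convergence of the second moments, after which the truncation argument above yields $(ii)$. The main step, and the one I expect to demand the most care, is $(ii)\Rightarrow(i)$. Here I would first observe that $(ii)$ with weak convergence forces $\sup_N\int\|x\|^2 d\mu^N<\infty$, hence $\mu\in{\cal P}_2(E)$ by Fatou's lemma. Invoking Skorokhod's representation theorem on the Polish space $E$ (see \cite{billing}), I would realize $\mu^N$ and $\mu$ as the laws of random variables $X_N,X$ on a common probability space with $X_N\to X$ almost surely. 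Since $(X_N,X)$ is then a coupling of $\mu^N$ and $\mu$, one has $W_2[\mu^N,\mu]^2\le\E\|X_N-X\|^2$, and it remains to send the right-hand side to $0$. Almost sure convergence gives $\|X_N-X\|^2\to0$ a.s., while the family $\{\|X_N-X\|^2\}_N$ is uniformly integrable because $\|X_N-X\|^2\le 2\|X_N\|^2+2\|X\|^2$ and the tail control $(ii)$ transfers to uniform integrability of $\{\|X_N\|^2\}$. Vitali's convergence theorem then gives $\E\|X_N-X\|^2\to0$, completing $(ii)\Rightarrow(i)$.

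The main obstacle is precisely this last implication: the delicate point is upgrading the almost sure convergence furnished by Skorokhod's theorem to $L^2$-convergence of the coupling, which is exactly where uniform integrability of the second moments, and not merely their boundedness, is indispensable. By contrast, the soft implications reduce entirely to the truncation identity for $\chi_R$ combined with dominated convergence.
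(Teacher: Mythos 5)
Your proof is correct. Note that the paper does not actually prove Proposition \ref{criteria}: it simply refers the reader to \cite{villani}, Theorem 6.9, so there is no in-text argument to compare against. Your write-up is a sound self-contained proof of that standard result. The soft equivalences $(ii)\Leftrightarrow(iii)$ and the implication $(i)\Rightarrow(ii)$ are handled exactly as one would expect (cutoff functions, Markov's inequality, the identity $W_2[\mu,\delta_0]^2=\int\|x\|^2d\mu$ with the reverse triangle inequality). For the hard implication $(ii)\Rightarrow(i)$ you take the Skorokhod-representation route: realize the measures as almost surely convergent random variables, observe that $(X_N,X)$ is an admissible coupling, and upgrade to $L^2$ via uniform integrability of $\{\|X_N\|^2\}_N$ (which does follow from the tail condition, after absorbing the finitely many small indices) and Vitali's theorem. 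This differs in presentation from Villani's argument, which works directly with (optimal) couplings and a truncation of the cost, but it is equally valid on a separable Banach space and is arguably the more economical path if one is willing to invoke Skorokhod's theorem. The only cosmetic point worth flagging is that the hypothesis in $(iii)$ should be read as a two-sided bound $|\phi(x)|\le C(1+\|x\|^2)$, as you implicitly do; with only the one-sided bound as literally written the pairing $\langle\phi,\mu^N\rangle$ need not be defined.
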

One could refer to \cite{villani}, section 6 (Theorem 6.9) for a proof.
This immediately results in
\begin{corollary}[Compact subsets of ${\cal P}_2(E)$] ~~\\ \label{compact}
A subset $A \subset {\cal P}_2(E)$ is relatively compact if and only if
\begin{itemize}
\item The family of measures $(\mu)_{\mu \in A}$ is tight
\item $\displaystyle{ \sup_{\mu \in A} \Big[ \int_{\|x\| > R} \| x \|^2 d\mu(x) \Big] \to 0 }$ as $R \to \infty$.
\end{itemize}
\end{corollary}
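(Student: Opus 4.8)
The plan is to exploit the fact that ${\cal P}_2(E)$ is metrizable (by $W_2$), so that relative compactness is equivalent to sequential relative compactness, and then to shuttle back and forth between the weak topology on ${\cal P}(E)$ and the Wasserstein topology using Proposition \ref{criteria}. I would prove the two implications separately, invoking Prokhorov's theorem for the purely weak part and the equivalence (i)$\Leftrightarrow$(ii) for the second-moment part.

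For the sufficiency of the two conditions, I would start from an arbitrary sequence $(\mu^n)_n$ in $A$. Tightness of $A$ together with Prokhorov's theorem yields a subsequence $\mu^{n'} \to \mu$ in ${\cal P}(E)$ for some $\mu \in {\cal P}(E)$. Since every $\mu^{n'}$ lies in $A$, the uniform tail control gives $\limsup_{n'} \int_{\|x\|>R} \|x\|^2 d\mu^{n'}(x) \le \sup_{\nu \in A} \int_{\|x\|>R} \|x\|^2 d\nu(x)$, and the right-hand side tends to $0$ as $R \to \infty$ by hypothesis. Choosing $R_0$ so that this supremum is $\le 1$ and using lower semicontinuity of $x \mapsto \|x\|^2$ under weak convergence shows $\int \|x\|^2 d\mu \le R_0^2 + 1$, so that $\mu \in {\cal P}_2(E)$; then the implication (ii)$\Rightarrow$(i) of Proposition \ref{criteria} promotes the convergence to $\mu^{n'} \to \mu$ in ${\cal P}_2(E)$. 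As every sequence in $A$ admits a $W_2$-convergent subsequence, $A$ is relatively compact in ${\cal P}_2(E)$.

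For the necessity, assume $A$ is relatively compact in ${\cal P}_2(E)$. Because $W_2$-convergence implies weak convergence (this is part of (i)$\Rightarrow$(ii)), $A$ is relatively compact in ${\cal P}(E)$, whence tight by Prokhorov, which is the first condition. For the second I would argue by contradiction: if the tails did not vanish uniformly, there would exist $\varepsilon > 0$, radii $R_n \to \infty$ and measures $\mu^n \in A$ with $\int_{\|x\|>R_n} \|x\|^2 d\mu^n(x) > \varepsilon$. Relative compactness extracts $\mu^{n'} \to \mu$ in ${\cal P}_2(E)$; the implication (i)$\Rightarrow$(ii) then furnishes a radius $R_\varepsilon$ with $\limsup_{n'} \int_{\|x\|>R_\varepsilon} \|x\|^2 d\mu^{n'}(x) < \varepsilon$, hence an index $N$ beyond which $\int_{\|x\|>R_\varepsilon} \|x\|^2 d\mu^{n'}(x) < \varepsilon$. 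Since $R_{n'} > R_\varepsilon$ for $n'$ large, monotonicity of the tail integral forces $\int_{\|x\|>R_{n'}} \|x\|^2 d\mu^{n'}(x) < \varepsilon$ as well, contradicting the construction.

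The step requiring the most care is this last contradiction argument: the condition in Proposition \ref{criteria}(ii) is phrased with a $\limsup$ over the sequence rather than as a uniform-in-$n$ bound, so I must first translate "$\limsup_{n'} \int_{\|x\|>R_\varepsilon} < \varepsilon$" into an eventual pointwise bound along the subsequence, and only then combine it with $R_{n'} \to \infty$ to defeat the fixed threshold. The remaining ingredients—Prokhorov's theorem and the equivalences already recorded in Proposition \ref{criteria}—are applied in a routine manner.
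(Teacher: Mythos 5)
Your proof is correct and follows essentially the route the paper intends: the corollary is stated there as an immediate consequence of Proposition \ref{criteria} together with Prokhorov's theorem (no separate proof is given), and your argument simply makes explicit the sequential extraction, the lower-semicontinuity step showing the weak limit has finite second moment, and the $\limsup$-to-eventual-bound translation in the contradiction argument. Nothing is missing.
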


We can now state the following criteria for the relative compactness in law in ${\cal P}_2(E)$, which we conveniently express through the means of a Skorokhod representation theorem.
\begin{proposition} \label{bridge}
Let $\mu^N : \Omega \to {\cal P}_2(E)$, $N \ge 1$, be a sequence of random measures on $E$. ~~\\
The following statements are equivalent:
\begin{enumerate}[i)]
\item $\{ I(\mu^N), N \ge 1 \}$ is relatively compact in ${\cal P}_2(E)$
\item The family of measures $(I(\mu^N))_N$ is tight and 
$\displaystyle{ \sup_N \int_{\|x\| > R} \| x \|^2 dI(\mu^N)(x) \xrightarrow[R \to \infty]{} 0 }$
\item Out of any subsequence of $(\mu^N)_N$, one can extract a subsequence $(\mu^{N'})_{N'}$ satisfying the following : there exists some probability space $(\widetilde \Omega, \widetilde {\cal F}, \widetilde{\Proba})$ and random variables ~~\\
$\widetilde \mu^{N'}, \widetilde \mu : \widetilde \Omega \to {\cal P}_2(E)$ such that
\begin{align*}
& \forall N', \; \; \widetilde \mu^{N'} \sim \mu^{N'} \text{ in law } \\
& W_2[\widetilde \mu^{N'} , \widetilde \mu] \xrightarrow[N' \to \infty]{} 0 \text{ $\widetilde \Proba$-a.s and in } L^2(\widetilde \Omega).
\end{align*}
\end{enumerate}
\end{proposition}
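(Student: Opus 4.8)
The plan is to establish the cycle of implications $(iii) \Rightarrow (i) \Leftrightarrow (ii) \Rightarrow (iii)$, using throughout that $E$ separable makes $({\cal P}_2(E), W_2)$ a Polish space, so that both Prokhorov's and Skorokhod's theorems are available. The equivalence $(i) \Leftrightarrow (ii)$ is immediate: it is exactly Corollary \ref{compact} applied to the set $A = \{ I(\mu^N) : N \ge 1 \} \subset {\cal P}_2(E)$, whose relative compactness amounts to its tightness together with the uniform integrability of $\|x\|^2$ stated in $(ii)$.

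For $(iii) \Rightarrow (i)$, I would use that in the metric space $({\cal P}_2(E), W_2)$ relative compactness coincides with sequential relative compactness. Given any subsequence of $(I(\mu^N))_N$, I apply $(iii)$ to the corresponding subsequence of $(\mu^N)_N$ to obtain $\mu^{N'}$ and Skorokhod copies $\widetilde\mu^{N'} \sim \mu^{N'}$, $\widetilde\mu$ with $W_2[\widetilde\mu^{N'}, \widetilde\mu] \to 0$ in $L^2(\widetilde\Omega)$. Since the intensity depends only on the law, $I(\mu^{N'}) = I(\widetilde\mu^{N'})$, and the crucial ingredient is the contraction estimate $W_2[I(\nu), I(\nu')]^2 \le \E\big[ W_2[\nu,\nu']^2 \big]$. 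This is obtained by selecting, measurably in $\omega$, an optimal coupling $\pi_\omega \in \Pi(\nu(\omega), \nu'(\omega))$ and checking that its intensity $\int_\Omega \pi_\omega \, d\Proba$ is a coupling of $I(\nu)$ and $I(\nu')$ with cost $\E[W_2[\nu,\nu']^2]$. Applied here, it gives $W_2[I(\mu^{N'}), I(\widetilde\mu)]^2 \le \E[W_2[\widetilde\mu^{N'}, \widetilde\mu]^2] \to 0$, so every subsequence of $(I(\mu^N))_N$ admits a $W_2$-convergent sub-subsequence, which is $(i)$.

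The implication $(ii) \Rightarrow (iii)$ is the substantial one. Fixing an arbitrary subsequence (relabelled $(\mu^N)$), I first show that $(\mu^N)$ is tight as a family of ${\cal P}_2(E)$-valued random variables, by adapting the proof of Proposition \ref{intensity} to the Wasserstein criterion of Corollary \ref{compact}. Using tightness of $(I(\mu^N))$ I pick compacts $C_m \subset E$ with $\sup_N I(\mu^N)(C_m^c) \le 4^{-m}$, and using $\eta(R) := \sup_N \int_{\|x\| > R} \|x\|^2 \, dI(\mu^N)(x) \to 0$ I pick radii $R_k$ with $\eta(R_k) \le 4^{-k}$. Then
\[
{\cal K}_\varepsilon = \Big\{ \mu \in {\cal P}_2(E) : \forall m,\ \mu(C_m^c) \le \varepsilon^{-1} 2^{-m}, \ \ \forall k,\ \int_{\|x\|>R_k} \|x\|^2 \, d\mu(x) \le \varepsilon^{-1} 2^{-k} \Big\}
\]
has relatively compact closure by Corollary \ref{compact}, while Markov's inequality applied to the random variables $\mu^N(C_m^c)$ and $\int_{\|x\|>R_k}\|x\|^2 \, d\mu^N$ (whose expectations are controlled by $I(\mu^N)$) yields $\Proba[\mu^N \notin {\cal K}_\varepsilon] \lesssim \varepsilon$ uniformly in $N$. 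Prokhorov's theorem gives a subsequence $\mu^{N'}$ converging in law in ${\cal P}_2(E)$, and Skorokhod's representation theorem produces the copies $\widetilde\mu^{N'}, \widetilde\mu$ with $W_2[\widetilde\mu^{N'}, \widetilde\mu] \to 0$ almost surely. To upgrade this to $L^2(\widetilde\Omega)$, I set $m_{N'} = \int \|x\|^2 \, d\widetilde\mu^{N'}$ and note that $\E[m_{N'} \mathbf{1}_A] \le R^2 \Proba[A] + \eta(R)$ for all $R$ and all events $A$; combined with $\Proba[m_{N'} > L] \le \E[m_{N'}]/L$ this shows $(m_{N'})_{N'}$ is uniformly integrable. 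Since $W_2[\widetilde\mu^{N'}, \widetilde\mu]^2 \le 2 m_{N'} + 2\int\|x\|^2 d\widetilde\mu$, the family $\{ W_2[\widetilde\mu^{N'},\widetilde\mu]^2 \}_{N'}$ is uniformly integrable, so the almost-sure convergence to $0$ promotes to convergence in $L^2(\widetilde\Omega)$.

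The main obstacle is the tightness step in $(ii) \Rightarrow (iii)$: one must construct compact sets in the finer topology of ${\cal P}_2(E)$, simultaneously controlling the mass escaping to infinity (as in Proposition \ref{intensity}) and the second-moment tails, the latter being precisely what separates ${\cal P}_2(E)$ from ${\cal P}(E)$ and what the uniform integrability hypothesis in $(ii)$ is designed to handle. A secondary technical point, arising in $(iii) \Rightarrow (i)$, is the measurable selection of the optimal couplings $\pi_\omega$; this is classical but deserves to be mentioned.
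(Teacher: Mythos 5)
Your proposal is correct and follows essentially the same route as the paper: the equivalence $i)\Leftrightarrow ii)$ via Corollary \ref{compact}, tightness in ${\cal P}_2(E)$ through the sets ${\cal K}_\varepsilon$ built from the compacts $C_m$ and the radii $R_k$ together with Markov's inequality, Skorokhod representation plus uniform integrability of the second moments for the $L^2(\widetilde\Omega)$ upgrade, and for $iii)\Rightarrow i)$ the contraction $W_2^2[I(\nu),I(\nu')]\le\E\big[W_2^2[\nu,\nu']\big]$ obtained by taking the intensity of a measurably selected optimal coupling. The only cosmetic difference is that you phrase $iii)\Rightarrow i)$ via sequential compactness of the intensities rather than exhibiting the limit $I(\mu)$ directly, which changes nothing of substance.
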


\begin{remark}
Note that the statement expressed in point \textit{iii)} is in fact slightly stronger than the tightness of the ${\cal P}_2(E)$-valued random variables $(\mu^N)_N$ since, up to a change of probability space, we are able to obtain a convergence in $L^2(\widetilde \Omega ; {\cal P}_2(E))$.
\end{remark}

\begin{proof}[Proof of Proposition \ref{bridge}]

The equivalence between $i)$ and $ii)$ is exactly stated in Corollary \ref{compact}. ~~\\ 
Assuming  $ii)$, we can introduce $(C_m)_{m \ge 1}$ compacts of $E$ and $(R_m)_m$ with $R_m \to \infty$ such that for all $m \ge 1$,
\begin{align*}
& \sup_N I(\mu^N)(C_m^c) \le 4^{-m} ,
& \sup_N  \int_{\| x \| > R_m} \| x \|^2 dI(\mu^N)(x) \le 4^{-m} .
\end{align*}
For a given $\varepsilon > 0$, let us define
\begin{align*}
K_{\varepsilon} = \left\{ \mu \in {\cal P}_2(E), \; \; \forall m \ge 1, \; \mu(C_m^c) \le \varepsilon^{-1} 2^{-m} , \; \;
\int_{ \| x \| > R_m } \| x \|^2 d\mu(x) \le \varepsilon^{-1} 2^{-m} \right\}
\end{align*}
which is a relatively compact subset of ${\cal P}_2(E)$ by Corollary \ref{compact}. Then, using Markov's inequality
\begin{align*}
\Proba[\mu^N \notin K_\varepsilon] & \le \sum_{m \ge 1} \Proba \Big[ \mu^N(C^c_m) > \varepsilon^{-1} 2^{-m} \Big] + 
\Proba\Big[ \int_{\| x \| > R_m} \| x \|^2 d\mu^N(x) > \varepsilon^{-1} 2^{-m} \Big]
\\
& \le  \varepsilon \sum_{m \ge 1} 2^m I(\mu^N)(C^c_m) + 2^m \int_{\| x \| > R_m} \| x \|^2 dI(\mu^N)(x)  \le 2 \varepsilon
\end{align*}
which proves the tightness of the ${\cal P}_2(E)$-valued random variables $(\mu^N)_N$. Let us introduce a subsequence $(\mu^{N'})_{N'}$ which converges in law to some $\mu \in {\cal P}_2(E)$. Applying Skorkhod's representation theorem on the polish space ${\cal P}_2(E)$ we get, on some probability space $(\widetilde \Omega, \widetilde {\cal F})$, $\widetilde \mu^{N'} \to \widetilde \mu$ a.s in ${\cal P}_2(E)$.~~\\
To conclude regarding the convergence in $L^2(\widetilde \Omega ; {\cal P}_2(E))$, it suffices to show that $W_2^2[\widetilde \mu^{N'} , \widetilde \mu]$ is uniformly integrable in $N'$. 
To this purpose, one can simply write 
\begin{align}
W_2^2[\widetilde \mu^{N'}, \widetilde \mu] \lesssim \int \| x \|^2 d \widetilde \mu^{N'} + \int \| x \|^2 d \widetilde \mu
\label{separate}
\end{align}
and note that for all $R, M > 0$,
\begin{align*}
\widetilde{\E} \Big[ \Big(\int \| x \|^2 d \widetilde \mu^{N'} \Big) \mathbf{1}_{ \{ \int \| x \|^2 d \widetilde \mu^{N'}  > R \} } \Big]
& \le \widetilde{\E} \Big[ \int_{ \| x \|^2 > M} \| x \|^2 d \widetilde \mu^{N'} \Big] + 
M \widetilde{\Proba} \Big[ \int \| x \|^2 d \widetilde \mu^{N'}  > R \Big] 
\\
& \le \sup_N \int_{\| x \|^2 > M} \| x \|^2 dI(\mu^N) + \frac{M}{R} \sup_N \int \| x \|^2 dI(\mu^N).
\end{align*}
This shows that for all $M > 0$,
\begin{align*}
\limsup_{R \to \infty} \sup_N 
\widetilde{\E} \Big[ \int \| x \|^2 d \widetilde \mu^{N'} \mathbf{1}_{ \{ \int \| x \|^2 d \widetilde \mu^{N'}  > R \} } \Big]
\le  \sup_N \int_{\| x \|^2 > M} \| x \|^2 dI(\mu^N)  \to 0 \text{ as } M \to \infty,
\end{align*}
hence the first term in \myref{separate} is uniformly integrable. As for the second one, a use of Fatou's lemma gives $\widetilde{\E} \Big[ \int \| x \|^2 d\widetilde \mu \Big] \le \sup_N \int \|x\|^2 dI(\mu^N) < \infty$. We have proved $iii)$.

Finally, let us show that $iii)$ implies $i)$. Let us introduce $\widetilde \pi(\omega)$ an optimal plan between $\widetilde \mu^{N'} (\omega)$ and $\widetilde \mu(\omega)$, that is
\begin{align*}
W_2^2[\widetilde \mu^{N'} , \widetilde \mu ](\omega) = \int_{x^1,x^2 \in E} \|x^1 - x^2 \|^2 d\widetilde \pi(\omega)(x^1,x^2) .
\end{align*}
 Note that such a coupling exists, and can indeed be selected to be measurable, see \cite{villani}, Theorem 4.1 and Corollary 5.22. 
Then $\widetilde \pi \in \Pi[ \widetilde \mu^{N'} , \widetilde \mu ]$ (for every $\omega \in \widetilde \Omega$) and it is clear from the definition that
\begin{align*}
I(\widetilde \pi) \in \Pi[ I(\widetilde \mu^{N'}), I(\widetilde \mu)] = \Pi[ I( \mu^{N'}) , I( \mu) ] .
\end{align*} 
Therefore it follows that
\begin{align*}
W^2_2[ I(\mu^{N'}) , I(\mu) ] & \le \int_{x^1,x^2 \in E} \| x^1 - x^2 \|^2 dI(\widetilde \pi)(x^1,x^2)  \\
& = \widetilde{ \E } \Big[ \int_{x^1, x^2 \in E} \| x^1 - x^2 \|^2 d \widetilde \pi(x^1,x^2) \Big]
= \E \Big[ W^2_2 [ \widetilde \mu^{N'} , \widetilde \mu ] \Big] \to 0 \text{ as } N' \to \infty.
\end{align*}
\end{proof}

\end{subsection}

\begin{subsection}{Proof of the weak convergence.}

We will now prove the result stated in Theorem \ref{chap2-thm1}. Consider $\mu_0 \in {\cal P}(\mathbb{R}^d)$ satisfying
\begin{align*}
\int |x|^{2+\delta} d\mu_0(x) < \infty \text{ for some } \delta > 0
\end{align*}
and a sequence of empirical measures $(\mu_0^N = \frac{1}{N} \sum_{i=1}^N \delta_{x_0^{i,N}})_N$ such that
\begin{align*}
\mu_0^N \to \mu_0 \text{ in } {\cal P}_{2}(\mathbb{R}^d), \; \; \; \; \sup_N \int |x|^{2+\delta} d\mu_0^N(x) < \infty .
\end{align*}
Let $(X_t^{i,N})_{t \in [0,T]}$, $i \in \{1, ... , N\}$ be the solution of \myref{ito} with intial data $X_0^{i,N} = x_0^{i,N}$. We shall look at these processes as random variables taking values in the (separable Banach) space of continuous functions:
\begin{equation}
\begin{array}{r l l}
X^{i,N} : \Omega & \longrightarrow & {\cal C}= C([0,T] ; \mathbb{R}^{d}) \\
\omega & \longmapsto & (X^{i,N}(\omega))_{t \in [0,T]}
\end{array} 
\end{equation}
where ${\cal C}$ is naturally equipped with the norm $\| x \|_\infty = \sup_{t \in [0,T]} |x_t|$. ~~\\ The associated empirical measure $\mu^N = \frac{1}{N} \sum_{i = 1}^N \delta_{X^{i,N}}$  is hence seen as a random element of ${\cal P}({\cal C})$. Its intensity measure is given by
\begin{align}
I(\mu^N)(A) = \frac{1}{N} \sum_{i=1}^N \Proba(X^{i,N} \in A), \; \; \; A \in {\cal B}({\cal C}).
\end{align}
Let us verify the assumptions of Proposition \ref{bridge} to establish the compactness in law in ${\cal P}_2({\cal C})$:
\begin{enumerate}
\item Firstly,
$
 \sup_N \int_{x \in {\cal C}} |x_0|^{2} dI(\mu^N) = \sup_N \int_{x \in \mathbb{R}^d} |x|^{2} d\mu_0^N < \infty
$
and Proposition \ref{kolmogorov} gives 
\begin{align*}
\sup_N \int_{x \in {\cal C}} |x_t - x_s|^{2+\delta} dI(\mu^N) & = \sup_N \frac{1}{N} \sum_i \E |X^{i,N}_t - X^{i,N}_s |^{2+\delta}
\\ &
\lesssim   \Big( 1+ \sup_N \int |x|^{2+\delta} d\mu_0^N \Big) |t-s|^{1 + \delta/2} .
\end{align*}
Classically, using Kolmogorov's continuity criterium, for any $\alpha \in ] 0 , 1/2 [$, defining the compact subset of ${\cal C}$
\begin{align*}
K_R = \left\{ x \in {\cal C}, \; \; |x_0| \le R, \; \sup_{t,s \in [0,T] } |x_t - x_s | \le R |t-s|^\alpha \right\},
\end{align*}
we get $ \sup_N I(\mu^N) (K_R^c) \xrightarrow[R \to \infty]{} 0$ the sequence $(I(\mu^N))_N$ is tight.

\item We have
\begin{align*}
\int_{ \| x \|_\infty \ge R} \| x \|_\infty^2 dI(\mu^N)(x) = \E \Big[ \int_{\|x\|_\infty \ge R} \| x \|_\infty^2 d\mu^N(x) \Big]
\le \frac{1}{R^{\delta}} \E \Big[ \int_x \| x \|_\infty^{2+\delta} d\mu^N(x) \Big]
\end{align*}
and taking the mean over $i$ in Proposition \ref{chap2-moments},
$$
\E \Big[ \int_x \| x \|_\infty^{2+\delta} d\mu^N(x) \Big] = 
\frac{1}{N} \sum_{i=1}^N \E \Big[ \sup_{t \in [0,T]} | X^{i,N}_t |^{2+\delta} \Big] \lesssim 1 + \int |x|^{2+\delta} d\mu^N_0(x)
$$
hence $\displaystyle{ \limsup_N \int_{\| x \|_\infty \ge R} \| x \|_\infty^2 dI(\mu^N)(x) \xrightarrow[R \to \infty]{} 0 }$.

\end{enumerate}

We can therefore introduce a subsequence and some probability space $(\widetilde \Omega, \widetilde{\Proba})$ with random variables $\widetilde \mu^{N'}, \widetilde \mu : \widetilde  \Omega \to {\cal P}_2({\cal C})$ such that
\begin{align*}
& \forall {N'}, \; \; \; \widetilde \mu^{N'} \sim \mu^{N'} \text{ in law} \\
& W_2[\widetilde \mu^{N'}, \widetilde \mu ]  \xrightarrow[N' \to \infty]{} 0 \text{ a.s } (\text{and in } L^2(\widetilde \Omega) ).
\end{align*}

Now, considering the law of the process $(\mu_t^N)_{t \ge 0}$ only, equation \myref{this} can be translated as: ~~\\
 for all $\psi \in C^2(\mathbb{R}^d)$ with $|\nabla \psi|, |\nabla^2 \psi| \lesssim1$,
\begin{align}
& M_\psi^N(t) = \langle \psi , \mu_t^N \rangle - \langle \psi , \mu_0^N \rangle - \int_0^t \langle L[\mu^N_s] \psi , \mu^N_s \rangle ds , \; \; t \ge 0, \nonumber
\\
& \text{ where } \; L[\mu] \psi =  \Big( B[\mu] + S[\mu] \Big) \cdot \nabla \psi + {\cal A}[\mu] \psi,  \label{L}
\end{align}
is a  continuous $L^2$ martingale on $\Omega$ with respect to the canonical filtration of $(\mu_t^N)_t$, whose quadratic variation is given by
\begin{align*}
\Big[ M_\psi^N \Big](t) & =  \int_0^t  \Big| \langle C[\mu_s^N] \cdot \nabla \psi  , \mu^N_s \rangle \Big| ^2 ds
+ \frac{1}{N^2} \sum_i \int_0^t \Big| \sigma^T(X_s^{i,N}) \nabla \psi(X_s^{i,N})  \Big|^2 ds 
\\
& =  \int_0^t  \Big| \langle C[\mu_s^N] \cdot \nabla \psi  , \mu^N_s \rangle \Big| ^2 ds + 
\frac{1}{N} \int_0^t \langle |\sigma^T \nabla \psi |^2  , \mu_s^N \rangle ds .
\end{align*}
Equivalently, this can be expressed as
\begin{align}
& \E   \Big[ \Big( M_\psi^N(t) - M_\psi^N(s) \Big) h(\mu^N_{t_1}, ... , \mu^N_{t_m}) \Big] = 0
\label{chap2-martingale}
\\
& \E   \Big[ \Big| M_\psi^N(t) - M_\psi^N(s) \Big|^2 h(\mu^N_{t_1}, ... , \mu^N_{t_m}) \Big]
=
\E \Big[  \Big( \Big[ M_\psi^N \Big](t) - \Big[ M_\psi^N \Big](s) \Big) \; h(\mu^N_{t_1}, ... , \mu^N_{t_m})  \Big]
\label{chap2-quadratic}
\end{align}
for all $0 \le t_1, ... , t_m \le s \le t$ and $h : {\cal P}_2(\mathbb{R}^d) \to \mathbb{R}$ continuous bounded.
Since $\widetilde \mu^{N'} \sim \mu^{N'}$ in law in ${\cal P}_2({\cal C})$, it is clear that the processes 
$(\mu_t^{N'})_{t \in [0,T]}$ and $(\widetilde \mu_t^{N'})_{t \in [0,T]}$ have the same law. It follows that \myref{chap2-martingale} and \myref{chap2-quadratic} also hold on the probability space $(\widetilde \Omega, \widetilde{\Proba})$ for
\begin{align*}
 \widetilde M_\psi^{N'}(t) = \langle \psi , \widetilde \mu_t^{N'} \rangle - \langle \psi , \mu_0^{N'} \rangle - \int_0^t \langle L[\widetilde \mu^{N'}_s] \psi , \widetilde \mu_s^{N'} \rangle ds , \; \; t \ge 0
\end{align*}
making it a continuous $L^2$ martingale on $\widetilde \Omega$, with respect to the canonical filtration of $(\widetilde \mu_t^{N'})_t$, with quadratic variation 
$\Big[ \widetilde M_\psi^{N'} \Big](t) =  \int_0^t  \Big| \langle C[\widetilde \mu_s^{N'}] \cdot \nabla \psi  , \widetilde \mu^{N'}_s \rangle \Big| ^2 ds
+ \frac{1}{N'} \int_0^t \langle |\sigma^T \nabla \psi |^2  , \widetilde \mu_s^{N'} \rangle ds 
 $.
\vspace{3mm}

We can now establish the following result.
\begin{proposition} \label{prop_mart}
For all $\psi \in C^2(\mathbb{R}^d)$ such that $| \nabla \psi |, | \nabla^2 \psi | \lesssim 1$,
\begin{align*} 
 \widetilde M_\psi(t) = \langle \psi , \widetilde \mu_t \rangle - \langle \psi , \mu_0 \rangle - \int_0^t \langle L[\widetilde \mu_s] \psi , \widetilde \mu_s \rangle ds , \; \; t \ge 0
\end{align*}
is a continuous $L^2$ martingale on $\widetilde \Omega$ with respect to the canonical filtration of $(\widetilde \mu_t)_t$, whose quadratic variation is given by
\begin{align*}
\Big[ \widetilde M_\psi \Big] (t) = \int_0^t  \Big| \langle C[\widetilde \mu_s] \cdot \nabla \psi  , \mu_s \rangle \Big| ^2 ds 
 =: V_\psi(t).
\end{align*}
\end{proposition}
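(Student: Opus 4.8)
The plan is to pass to the limit $N' \to \infty$ in the two martingale identities \myref{martingale} and \myref{quadratic}, which (as noted just above) also hold on $\widetilde \Omega$ for $\widetilde M_\psi^{N'}$ and its quadratic variation $[\widetilde M_\psi^{N'}]$. Since the canonical filtration of $(\widetilde \mu_t)_t$ is probed through bounded continuous functionals $h(\widetilde \mu_{t_1}, \dots, \widetilde \mu_{t_m})$ with $t_1, \dots, t_m \le s$, controlling the limits of these two identities will simultaneously give that $\widetilde M_\psi$ is a martingale and that its quadratic variation equals $V_\psi$ (the $\frac{1}{N'}$ contribution to $[\widetilde M_\psi^{N'}]$ being shown to vanish). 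The continuity of $t \mapsto \widetilde M_\psi(t)$ follows from the fact that $\widetilde \mu$ is supported on ${\cal C}$: writing $\langle \psi, \widetilde \mu_t \rangle = \int \psi(x_t) \, d\widetilde \mu(x)$, using that $\psi$ is Lipschitz (as $|\nabla \psi| \lesssim 1$) together with the moment bound, dominated convergence yields continuity of $t \mapsto \langle \psi, \widetilde \mu_t \rangle$, while the drift integral is manifestly continuous.

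The central step is the $W_2$-continuity of the nonlinear functionals $\mu \mapsto \langle L[\mu_s]\psi, \mu_s \rangle$ and $\mu \mapsto \langle C[\mu_s]\cdot\nabla\psi, \mu_s \rangle$. Each is a multilinear integral against $\mu_s$: for instance $\langle B[\mu_s]\cdot\nabla\psi, \mu_s\rangle = \iint b(x,y)\cdot\nabla\psi(x)\, d\mu_s(x)\,d\mu_s(y)$, and the second-order term $\langle {\cal A}[\mu_s]\psi, \mu_s\rangle$ unfolds as a triple integral of $c_i(x,y)\,c_j(x,z)\,\partial^2_{ij}\psi(x)$. The key observation is that if $\widetilde \mu^{N'}_s \to \widetilde \mu_s$ in ${\cal P}_2(\mathbb{R}^d)$—which holds $\widetilde\Proba$-a.s. for every $s$, since evaluation at $s$ is a $1$-Lipschitz map ${\cal C} \to \mathbb{R}^d$ and $\widetilde \mu^{N'} \to \widetilde \mu$ in ${\cal P}_2({\cal C})$ on a single a.s. event—then the tensor powers satisfy $(\widetilde \mu^{N'}_s)^{\otimes k} \to (\widetilde \mu_s)^{\otimes k}$ in the ${\cal P}_2$ of the product space. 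Under Assumption \ref{hyp1} the integrands are continuous with at most quadratic growth in $(x,y,z)$ (since $b,c$ are sublinear while $\nabla\psi, \nabla^2\psi$ are bounded), so Proposition \ref{criteria} (criterion \textit{iii)}) applied to these tensor powers furnishes convergence of the functionals. Together with the continuity of $\mu \mapsto \langle \psi, \mu_t\rangle$, this yields $\widetilde M_\psi^{N'}(t) \to \widetilde M_\psi(t)$ and $[\widetilde M_\psi^{N'}](t) \to V_\psi(t)$ a.s.; the time integrals pass to the limit by dominated convergence with domination $\lesssim 1 + \int |x|^2 \, d\widetilde \mu^{N'}_s$ from the same growth bounds, and the $\frac{1}{N'}$ term vanishes.

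It then remains to upgrade this a.s. convergence to convergence of the expectations in \myref{martingale} and \myref{quadratic}, which requires uniform integrability. Here the moment estimates of Proposition \ref{moments}, reinforced by the $2+\delta$ hypothesis on the initial data, are decisive: they provide uniform-in-$N'$ bounds of the type $\widetilde{\E}\big[(\int \|x\|_\infty^2 \, d\widetilde \mu^{N'})^{1+\delta/2}\big] \lesssim 1$, from which one extracts uniform integrability of $|\widetilde M_\psi^{N'}(t) - \widetilde M_\psi^{N'}(s)|^2$ and of $[\widetilde M_\psi^{N'}](t) - [\widetilde M_\psi^{N'}](s)$. Passing to the limit then gives $\widetilde{\E}[(\widetilde M_\psi(t) - \widetilde M_\psi(s))\, h] = 0$ and $\widetilde{\E}[|\widetilde M_\psi(t) - \widetilde M_\psi(s)|^2 h] = \widetilde{\E}[(V_\psi(t) - V_\psi(s))\, h]$ for all admissible $h$, which is exactly the assertion that $\widetilde M_\psi$ is a continuous $L^2$ martingale with $[\widetilde M_\psi] = V_\psi$. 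I expect the main obstacle to be precisely this $W_2$-continuity of the multilinear functionals under merely locally Lipschitz and sublinear coefficients: the argument must exploit the exact match between the quadratic growth of the integrands and the quadratic Wasserstein distance, which is why tensorization combined with criterion \textit{iii)} of Proposition \ref{criteria}, rather than a direct Lipschitz estimate, is the natural tool.
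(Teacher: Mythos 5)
Your proposal is correct and follows essentially the same route as the paper's proof: almost-sure convergence of each (multilinear, sub-quadratically growing) functional via $W_2$-convergence of the tensor powers of $\widetilde \mu^{N'}_s$ combined with criterion \textit{iii)} of Proposition \ref{criteria}, the vanishing of the $\frac{1}{N'}$ term, and uniform integrability supplied by the $2+\delta$ moment bounds in order to pass to the limit in \myref{martingale} and \myref{quadratic}. The only step you leave implicit is that the uniform integrability of $|\widetilde M^{N'}_\psi(t)|^2$ is not read off directly from the bound $\widetilde{\E}\big[(\int \|x\|_\infty^2 \, d\widetilde\mu^{N'})^{1+\delta/2}\big] \lesssim 1$ (a pointwise bound on $\widetilde M^{N'}_\psi$ would require roughly $4+\delta$ moments); as in the paper, one first gets uniform integrability of $[\widetilde M^{N'}_\psi](t)$ and then invokes Burkholder--Davis--Gundy to obtain $\widetilde{\E}\,|\widetilde M^{N'}_\psi(t)|^{2+\delta} \lesssim \widetilde{\E}\big[[\widetilde M^{N'}_\psi](t)^{1+\delta/2}\big] \lesssim 1$.
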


\begin{proof}[Proof]

Let us work on $\widetilde \Omega$, but drop the tildas on $\mu$, $\Proba$ and the primes on $N$ for clarity. ~~\\
 Given $0 \le s \le t \le T$ and $0 \le t_1 \le ... \le t_m \le s$ and $h$ continuous bounded, we wish to send $N \to \infty$ in \myref{chap2-martingale} and \myref{chap2-quadratic}. It is enough to verify the following points:
\begin{enumerate}
\item $M_\psi^N (t) \to M_\psi(t)$ in probability
\item $|M^N_\psi(t)|^2$ is uniformly integrable in $N$
\item $\Big[ M_\psi^N \Big](t) \to V_\psi(t)$ in probability
\item$\Big[ M_\psi^N \Big] (t)$ is uniformly integrable in $N$.
\end{enumerate}
Given that $W_2[\mu^N, \mu] \to 0$ almost surely, using the immediate inequality
$$W^2_2[\mu \otimes \mu', \nu \otimes \nu' ] \le W^2_2[\mu, \nu] + W^2_2[\mu', \nu'],$$
we also derive that $W_2[(\mu^N)^{\otimes2}, \mu^{\otimes2}], W_2[(\mu^N)^{\otimes3}, \mu^{\otimes3}] \to 0$. Recalling \myref{L}, let us review the different terms involved in $M^N_\psi(t)$.
\vspace{3mm}

Since $|\psi(x)| \lesssim 1 + |x|$, we deduce from Proposition \ref{criteria} that $\langle \psi, \mu^N_t \rangle \to \langle \psi, \mu \rangle$ a.s. Moreover, $| \langle \psi, \mu^N_t \rangle | \lesssim 1 + \int \|x\|_\infty d\mu^N(x)$.
\vspace{3mm}

The term $\int_0^t \langle B[\mu_s^N] \cdot \nabla \psi , \mu_s^N \rangle ds$ can be written as
\begin{align*}
\int_0^t  \int_{(\mathbb{R}^{d})^2}  b(x,y) \cdot \nabla \psi(x) d(\mu_s^N \otimes \mu_s^N)(x,y)  ds
= \int_{ {\cal C}^2} \Big( \int_0^t  b(x_s,y_s) \cdot \nabla \psi(x_s) ds \Big) d(\mu^N \otimes \mu^N)(x,y) 
 \end{align*}
which converges almost surely to the expected term since the functional is indeed sub-quadratic ( $ | b(x,y) | \lesssim |x| + |y|$ and $| \nabla \psi | \lesssim 1$). 
The term involving $S[\mu_t^N]$ is treated in the same way.
\vspace{3mm}

 Recalling the form \myref{calA}, the term $\int_0^t \langle {\cal A}[\mu_s^N] \psi, \mu_s^N \rangle ds$ can be written similarly as
\begin{align*}
\sum_{i,j} \int_0^t \int_x \int_y \int_z \Big[ \sum_k \sigma_{i,k}(x) \sigma_{j,k}(x) + c_i(x,y) c_j(x,z) \Big] \partial^2_{i,j} \psi(x)
d(\mu_s^N \otimes \mu_s^N \otimes \mu_s^N)(x,y,z) ds .
\end{align*}
again, this converges almost surely since 
\begin{align*}
& \Big| \sigma_{i,k}(x) \sigma_{j,k}(x) \Big| \lesssim 1 + |x|^2, 
& \Big| c_i(x,y) c_j(x,z) \Big| \lesssim 1 + |x|^2 + |y|^2 + |z|^2 ,
& &  | \nabla^2 \psi | \lesssim 1 .
\end{align*}

Point $1.$ is hence proven. Let us skip Point $2.$ for now and consider $\Big[ M_\psi^N \Big](t)$.
Using the same arguments as before, 
$
\Big| \langle C[ \mu^N_s] \cdot \nabla \psi, \mu^N_s \rangle \Big|^2 
\to
\Big| \langle C[ \mu_s] \cdot \nabla \psi, \mu_s \rangle \Big|^2 
$
almost surely for fixed $s~\in~[0,t]$. The bound 
$$\Big| \langle C[ \mu^N_s] \cdot \nabla \psi, \mu^N_s \rangle \Big|^2 \lesssim 1 + \int \|x\|^2_\infty d\mu^N(x),
\;  \text{  with  }  \;
 \sup_N \E \Big[ \int \|x\|_\infty^{2 + \delta} d\mu^N(x) \Big]<\infty$$
guarantees the uniform integrability in $(t,\omega)$, so that
\begin{align*}
\int_0^t \Big| \langle C[ \mu^N_s] \cdot \nabla \psi, \mu^N_s \rangle \Big|^2 \to 
\int_0^t \Big| \langle C[ \mu_s] \cdot \nabla \psi, \mu_s \rangle \Big|^2
= V_\psi(t)
 \text{ in $L^1(\widetilde \Omega)$,}
\end{align*}
in particular in probability. Additionally, $\Big| \langle |\sigma^T \nabla \psi|^2 , \mu_s^N \rangle \Big| \lesssim 1 + \int \|x\|^2_\infty d\mu^N(x)$ which is bounded in $L^1(\widetilde \Omega)$ uniformly in $N$, so that
\begin{align*}
\frac{1}{N} \int_0^t  \langle |\sigma^T \nabla \psi|^2 , \mu_s^N \rangle \to 0 \text{ in $L^1(\widetilde \Omega)$},
\end{align*}
in particular in probability, which proves point $3$. ~~\\ 
We have in fact just seen  that $\Big[ M_\psi^N \Big](t)^{1 + \delta/2} \lesssim 1 + \int \|x\|^{2 + \delta}_\infty d\mu^N(x)$ which is bounded in $L^1(\widetilde \Omega)$ uniformly in $N$, hence giving point $4$. Finally, Burkholder-Davis-Gundy's inequality gives
\begin{align*}
\E \Big| M^N_\psi(t) \Big|^{2 + \delta} \lesssim \E \Big| \Big[ M^N_\psi \Big] (t) \Big|^{1 + \delta/2}
\end{align*}
and we derive point $2.$ from point $4$.
\end{proof}
From $(\tilde \mu_t)_{t \ge 0}$ satisfying this martingale problem stated in Proposition \ref{prop_mart}, we classically construct a weak solution using a martingale representation theorem in some Hilbert space. 

We start by noting that ${\cal P}_2(\mathbb{R}^d)$ is continuously embedded in the Sobolev space $H^{-\gamma} = \left( H^\gamma \right)'$ (where $H^\gamma= W^{\gamma,2}(\mathbb{R}^d)$) as soon as $\gamma > 1 + d/2$. Indeed, for $\psi \in H^\gamma$, 
\begin{align*}
\Big| \langle \psi, \mu  \rangle - \langle \psi, \nu \rangle \Big| \le \| \nabla \psi \|_\infty W_2[\mu, \nu]
\le C \| \psi \|_{H^\gamma} W_2[\mu,\nu]
\end{align*}
where we have used the continuous Sobolev embedding $H^\gamma \subset C^1_b$ for $\gamma > 1 + d/2$. We may consider the $H^{-\gamma-2}$-valued process
\begin{align*}
\widetilde M(t) = \widetilde \mu_t - \mu_0 - \int_0^t L[\widetilde \mu_s]^* \widetilde \mu_s ds, \; \; t \in [0,T]
\end{align*}
which satisfies, for all $\psi \in H^{\gamma+2}$ (a Sobolev embedding gives $|\nabla \psi|, |\nabla^2 \psi| \lesssim 1$),
\begin{align*}
\langle \widetilde M(t) , \psi \rangle = \widetilde M_\psi(t), \; \; t \in [0,T],
\end{align*}
which is a continuous $L^2$ martingale with respect fo the filtration 
$$
{\widetilde {\cal F}}_t = \sigma \left( \widetilde \mu_s \in {\cal P}_2(\mathbb{R}^d), \; s \in [0,t] \right),
\; \; \; t \in [0,T],
$$
 with quadratic variation $V_\psi(t)$. Using a polarisation formula, we deduce more precisely that for $\psi_1, \psi_2 \in H^{\gamma+2}$,
\begin{align}
\langle \widetilde M(t) , \psi_1 \rangle  \langle \widetilde M(t) , \psi_2 \rangle - \langle V(t) \psi_1, \psi_2 \rangle, \; \; t \in [0,T]
\end{align}
is a continuous $({\widetilde {\cal F}}_t)_t$-martingale, where 
$ \langle V(t) \psi_1, \psi_2 \rangle = \int_0^t \langle C[\widetilde \mu_s] \cdot \nabla \psi_1 , \widetilde \mu_s \rangle \langle C[\widetilde \mu_s] \cdot \nabla \psi_2 , \widetilde \mu_s \rangle ds$. The martingale representation theorem from \cite{da_prato} p222 (Theorem 9.2) then holds, giving another probability space $(\widehat \Omega, \widehat {\cal F}, \widehat{\Proba})$ with a filtration $(\widehat{\cal F}_t)_{t \in [0,T]}$ and a $(\widetilde{\cal F}_t \times \widehat{\cal F}_t)$-brownian motion $(W_t)_{t \in [0,T]}$ on 
$(\widetilde \Omega \times \widehat \Omega, \widetilde{\Proba} \otimes \widehat{\Proba})$ 
such that
\begin{align}
\widetilde M(t)(\tilde \omega, \hat \omega) := \widetilde M(t)(\tilde \omega)
= - \int_0^t \nabla \cdot \left( C[\widetilde \mu_s(\tilde \omega)] \widetilde \mu_s(\tilde \omega) \right) dW_s(\tilde \omega, \hat \omega).
\end{align}
It follows that $(\tilde \omega, \hat \omega) \mapsto (\widetilde \mu_t(\tilde \omega))_{t \in [0,T]}$ is a solution of \myref{chap2-spde} on $\widetilde \Omega \times \widehat \Omega$ according to Definition~\ref{solution} (whose law is of course the same as that of $\tilde \omega \mapsto (\widetilde \mu_t(\tilde \omega))_{t \in [0,T]}$).

\end{subsection}

\end{section}


\begin{section}{Strong mean-field convergence}

In this section, for simplicity, we restrict ourselves to the setting of a common noise, according to
Assumption \ref{hyp_common}. 

In this case, the limiting SPDE \myref{chap2-spde} becomes a stochastic conservation equation (given by \myref{conservation}) and solutions $\mu_t$ are naturally expected to be obtained as the push-forward measures of $\mu_0$ through the flow of the associated (non-linear) stochastic characteristics.

\begin{subsection}{Stochastic characteristics.} \label{section_chara}

Let us suppose that Assumptions \ref{hyp1} and \ref{hyp2} hold.

\begin{define} \label{transport_form}~~\\
Given some random $\mu  \in {\cal P}_2({\cal C})$ such that $\E \Big[ \int \| x \|^2_\infty d\mu(x) \Big] < \infty$, the characteristics $X^\mu$ are defined as the solution of
\begin{equation}
\left\{
\begin{array}{l}
 dX_t^\mu(x) = \Big( B[\mu_t] + S[\mu_t] \Big)(X^\mu_t(x)) dt + C[\mu_t](X^\mu_t(x)) d\beta_t, \; \; t \in [0,T],
\\
 X_0^\mu(x) = x \in \mathbb{R}^d .
 \end{array}
 \right.
\label{chap2-chara}
\end{equation}
A random measure $\mu : \Omega \to  {\cal P}_2({\cal C})$ is said to be "of the transport form" if it satisfies the fixed-point like identity
\begin{align}
\mu = (X^\mu)^* \mu_0 \; \; a.s 
\label{the_transport}
\end{align}
where $X^\mu = (X_t^\mu(x))_{t \in [0,T], x \in \mathbb{R}^d}$ is the flow of characteristics associated to \myref{chap2-chara} and the measure $(X_t^\mu)^*\mu_0 \in {\cal P}({\cal C})$ is defined by: for all 
$m \ge 1, \; \; t_1, \ldots t_m \in [0,T], \; \; \psi \in C_b((\mathbb{R}^d)^m)$,
\begin{align*}
\int_{{\cal C}} \psi(x_{t_1}, \ldots, x_{t_m}) d \left( (X^\mu)^* \mu_0 \right)(x) = 
\int_{\mathbb{R}^d} \psi(X^\mu_{t_1}(x), \ldots X^\mu_{t_m}(x)) d\mu_0(x) .
\end{align*}
\end{define}

\begin{remark} \label{chap2-continuity}

Using Assumptions \ref{hyp1} and \ref{hyp2}, given $\mu  \in {\cal P}_2({\cal C})$ satisfying $\E \Big[ \int \| x \|^2_\infty d\mu(x) \Big] < \infty$, one may easily establish that, for any fixed $x \in \mathbb{R}^d$, $\mathbb{E} \Big[ \sup_{t \in [0,T]} \Big| X^\mu_t(x) \Big|^2 \Big] < \infty$, so that \myref{chap2-chara} admits a unique global solution.

Moreover, it is to be noted that the flow $x \mapsto X^\mu(x) \in {\cal C}$ is almost-surely continuous, so that the push-forward measure $(X^\mu)^*\mu_0$ is indeed well defined. This can easily be seen in the case where the kernels $b$, $c$, $s_1$ are globally Lipschitz-continuous, since we can derive some Kolmogorov estimate of the form
\begin{align*}
\E \Big[ \sup_{t \in [0,T]} | X^\mu_t(x) - X^\mu_t(x') |^p \Big] \lesssim |x-x'|^p .
\end{align*}
The result follows in the locally Lipschitz-continuous case using a classic stopping-time method (found e.g in \cite{fang}).
\end{remark}

\begin{remark}
For some fixed $N \ge 1$, let $(X^{i,N})^{i=1, \ldots, N}$ be the solution of the particle system \myref{model} with initial data $(x_0^{i,N})^{i=1, \ldots, N}$ and let $\mu^N= \frac{1}{N} \sum_{i=1}^N \delta_{X^{i,N}}$ the associated empirical measure.~~\\
Then one can see that $X_t^{i,N} = X_t^{\mu^N}(x_0^{i, N})$ so that $\mu^N = (X^{\mu^N})^* \mu_0^N$ is of the transport form.
\end{remark}

Measures of the transport form are, by design, solutions of the conservation equation \myref{conservation}

\begin{proposition} \label{transport_solution}
Let $\mu_0 \in {\cal P}_2({\cal C})$ and $\mu = (X^\mu)^* \mu_0$ be of the transport form.~~\\
Then $\mu = (\mu_t)_{t \in [0,T]}$ satisfies the SPDE \myref{conservation} in the sense of Definition \ref{solution}.
\end{proposition}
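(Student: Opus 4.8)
The plan is to exploit the representation $\langle \psi, \mu_t \rangle = \int_{\mathbb{R}^d} \psi(X^\mu_t(x)) \, d\mu_0(x)$, valid for any $\psi \in C^2_c(\mathbb{R}^d)$ because $\mu_t = (X^\mu_t)^* \mu_0$, and to deduce the weak formulation \myref{eq_psi} by applying It\^o's formula to the characteristics and integrating against $\mu_0$. First I would fix $x \in \mathbb{R}^d$ and apply It\^o's formula to $\psi(X^\mu_t(x))$, using the characteristic equation \myref{chara} (already written in It\^o form). Since the only driving noise is the one-dimensional common Brownian motion $\beta$, the quadratic covariation of the components reads $d[X^{\mu,i}(x), X^{\mu,j}(x)]_s = C_i[\mu_s](X^\mu_s(x)) C_j[\mu_s](X^\mu_s(x)) \, ds$, so that the second-order term produced by It\^o's formula is exactly ${\cal A}[\mu_s]\psi$ evaluated along the characteristic (recall $\sigma = 0$ under Assumption \ref{hyp_common}, so that ${\cal A}[\mu]\psi = \tfrac{1}{2}\sum_{i,j} C_i[\mu] C_j[\mu] \partial^2_{i,j}\psi$). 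This yields, for each $x$,
\begin{align*}
\psi(X^\mu_t(x)) = \psi(x) + \int_0^t \Big[ \big(B[\mu_s] + S[\mu_s]\big)\cdot\nabla\psi + {\cal A}[\mu_s]\psi \Big](X^\mu_s(x))\, ds + \int_0^t \nabla\psi(X^\mu_s(x))\cdot C[\mu_s](X^\mu_s(x))\, d\beta_s .
\end{align*}

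Next I would integrate this identity against $\mu_0(dx)$. The left-hand side becomes $\langle \psi, \mu_t \rangle$ and the first term on the right becomes $\langle \psi, \mu_0 \rangle$. For the Lebesgue (drift) integral, an application of the ordinary Fubini theorem together with the pushforward relation $\mu_s = (X^\mu_s)^*\mu_0$ turns the double integral into $\int_0^t \langle (B[\mu_s]+S[\mu_s])\cdot\nabla\psi + {\cal A}[\mu_s]\psi, \mu_s \rangle \, ds$, precisely the drift term of Definition \ref{solution}. Fubini is licensed here because, using the sublinear bound \myref{sublinear_B}, the boundedness of $\nabla\psi$ and $\nabla^2\psi$ (as $\psi \in C^2_c$), and the moment estimates recorded in Remark \ref{continuity}, the integrand is absolutely integrable on $[0,t]\times\mathbb{R}^d$ with respect to $ds\,d\mu_0(x)$.

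The main obstacle is the martingale term, for which I would invoke a stochastic Fubini theorem to obtain
\begin{align*}
\int_{\mathbb{R}^d} \Big( \int_0^t \nabla\psi(X^\mu_s(x))\cdot C[\mu_s](X^\mu_s(x))\, d\beta_s \Big)\, d\mu_0(x) = \int_0^t \Big( \int_{\mathbb{R}^d} \nabla\psi(X^\mu_s(x))\cdot C[\mu_s](X^\mu_s(x))\, d\mu_0(x) \Big)\, d\beta_s ,
\end{align*}
after which the inner integral is recognized, via $\mu_s = (X^\mu_s)^*\mu_0$, as $\langle C[\mu_s]\cdot\nabla\psi, \mu_s \rangle$, giving exactly the stochastic integral in \myref{eq_psi}. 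The hypothesis needed for this exchange is the square-integrability
\begin{align*}
\E \int_{\mathbb{R}^d} \int_0^t \big| \nabla\psi(X^\mu_s(x))\cdot C[\mu_s](X^\mu_s(x)) \big|^2 \, ds\, d\mu_0(x) < \infty ,
\end{align*}
which I would verify from the bound $|C[\mu_s](z)| \lesssim 1 + |z| + \int |y|\,d\mu_s(y)$ of \myref{sublinear_B}, the boundedness of $\nabla\psi$, and the control $\E[\int\|x\|_\infty^2\,d\mu(x)] = \E[\int \sup_{t}|X^\mu_t(x)|^2\,d\mu_0(x)] < \infty$ built into the transport-form hypothesis (Definition \ref{transport_form}).

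Finally I would record the measurability requirements of Definition \ref{solution}: the adaptedness of $t \mapsto \langle\psi, \mu_t\rangle$ follows from the adaptedness of the characteristic flow $X^\mu$, which solves an SDE driven by $\beta$ with coefficients measurable in $\mu$, while the existence of a continuous version follows from the pathwise continuity of $t \mapsto X^\mu_t(x)$ together with a dominated-convergence argument justified by the same moment bounds. Assembling the three terms then reproduces \myref{eq_psi}, establishing that $\mu$ solves \myref{conservation} in the sense of Definition \ref{solution}.
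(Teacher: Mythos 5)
Your proposal is correct and follows essentially the same route as the paper: apply It\^o's formula to $\psi(X^\mu_t(x))$ along the characteristics, integrate against $d\mu_0(x)$, use a stochastic Fubini theorem for the martingale term (justified by the square-integrability coming from the sublinearity of $C$ and the moment bound on the transport-form measure), and identify the resulting terms with \myref{eq_psi} via the pushforward relation. Your write-up is in fact somewhat more explicit than the paper's about the Fubini justifications and the identification of the second-order term as ${\cal A}[\mu_s]\psi$.
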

\begin{proof}[Proof]
Firstly, using the same reasoning as in the proof of Proposition \ref{chap2-moments}, one can easily show
\begin{align}
\E \int_{\cal C} \| x \|^2_\infty d\mu(x) = \E \int_{\mathbb{R}^d} \sup_{t \in [0,T]} \Big| X^\mu_t(x) \Big|^2 d\mu_0(x)
\lesssim 1 + \int_{\mathbb{R}^d} |x|^2 d\mu_0(x),
\label{estimate_mu}
\end{align}
so that the characteristics \myref{chap2-chara} are globally well-defined. For any $\psi \in C^2_c(\mathbb{R}^d)$, since $\mu \in {\cal P}({\cal C})$, the process $(\langle \psi, \mu_t \rangle)_{t \in [0,T]}$ is automatically (adapted and) almost surely continuous. Itô's formula then results in
\begin{align*}
\psi(X^\mu_t(x)) & = \psi(x) + \int_0^t \nabla \psi(X^\mu_s(x)) \cdot \left( B[\mu_s] + S[\mu_s] + {\cal A}[\mu_s] \right)(X^\mu_s(x)) ds 
\\ & \; \; \; \; + \int_0^t \nabla \psi(X^\mu_s(x)) \cdot C[\mu_t](X^\mu_s(x)) d\beta_s .
\end{align*}
Note that, using the sublinearity Assumption \ref{hyp1} and the estimate \myref{estimate_mu}, the stochastic integral involved here easily defines a square-integrable martingale.
Integrating with respect to $d\mu_0(x)$ using a stochastic Fubini theorem gives exactly \myref{eq_psi}, so that Definition \ref{solution} is satisfied.
\end{proof}

We now formulate an estimate which locally compares two solutions of the transport form.

\begin{proposition}[${\cal P}_p$-comparison estimate for compactly-supported initial data] ~~\\
\label{comparison}
Let $\mu_0, \widetilde \mu_0 \in {\cal P}(\mathbb{R}^d)$  be supported in some compact set $K \subset \mathbb{R}^d$, and let $\mu, \widetilde \mu : \Omega \to {\cal P}_2({\cal C})$ be of the transport form.  For all $p > 1$, $R > 0$, defining the stopping time
\begin{align*}
\tau_R = \inf \left\{ t \ge 0, \; \sup_{x \in K} \Big( |X^\mu_t(x)| + |X^{\widetilde \mu}_t(x)| \Big) \ge R  \right\} \wedge T
\end{align*}
there exists some constant $C_{p,R,T} > 0$ such that
\begin{align}
\E \Big[ \sup_{t \in [0, \tau_R]} W_p^p[\mu_t, \widetilde \mu_t] \Big] \le C_{p,R,T} W^p_p[\mu_0, \widetilde \mu_0] .
\end{align}
\end{proposition}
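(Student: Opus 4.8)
The plan is to linearise the comparison of the two nonlinear flows by transporting a single coupling of the initial data along both families of characteristics. Let $\pi_0 \in \Pi(\mu_0, \widetilde\mu_0)$ be an optimal plan for $W_p$; since $\mu_0, \widetilde\mu_0$ are supported in $K$, the plan $\pi_0$ is supported in $K \times K$ and $\int |x-y|^p \, d\pi_0(x,y) = W_p^p[\mu_0, \widetilde\mu_0]$. Writing $D_t(x,y) = X_t^\mu(x) - X_t^{\widetilde\mu}(y)$ and $A_t := \int_{K \times K} |D_t(x,y)|^p \, d\pi_0(x,y)$, the image of $\pi_0$ under $(x,y) \mapsto (X_t^\mu(x), X_t^{\widetilde\mu}(y))$ is a (generally suboptimal) coupling of $\mu_t = (X_t^\mu)^* \mu_0$ and $\widetilde\mu_t = (X_t^{\widetilde\mu})^* \widetilde\mu_0$, so that $W_p^p[\mu_t, \widetilde\mu_t] \le A_t$ for every $t$, while $A_0 = W_p^p[\mu_0, \widetilde\mu_0]$. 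It therefore suffices to prove $\mathbb{E}\big[ \sup_{t \le \tau_R} A_t \big] \le C_{p,R,T}\, A_0$.

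The key step is to control, up to $\tau_R$, the two coefficient differences
\[
\Delta^B_t = (B+S)[\mu_t](X_t^\mu(x)) - (B+S)[\widetilde\mu_t](X_t^{\widetilde\mu}(y)), \qquad \Delta^C_t = C[\mu_t](X_t^\mu(x)) - C[\widetilde\mu_t](X_t^{\widetilde\mu}(y))
\]
(recall $S = S_1$ here, as $\sigma = 0$ under Assumption \ref{hyp_common}) by $|D_t(x,y)|$ and $A_t^{1/p}$. By definition of $\tau_R$ and compact support of the data, every characteristic issued from $K$ remains in the ball $B_R = \{ |z| \le R \}$ for $t \le \tau_R$, hence $\mu_t, \widetilde\mu_t$ are supported in $B_R$. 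I would split each difference into a \emph{same measure, moved base point} piece, e.g. $(B+S)[\mu_t](X_t^\mu(x)) - (B+S)[\mu_t](X_t^{\widetilde\mu}(y))$, which is $\lesssim_R |D_t(x,y)|$ since $B[\mu_t], S[\mu_t]$ are Lipschitz on $B_R$ with constant governed by the local Lipschitz norms of $b, c, \nabla c$ on $B_R$ (Assumption \ref{hyp2}) and $\mu_t(B_R) = 1$; and a \emph{same base point, moved measure} piece, which is rewritten through the same plan $\pi_0$: since $\mu_t = (X_t^\mu)^* \mu_0$ and $\widetilde\mu_t = (X_t^{\widetilde\mu})^* \widetilde\mu_0$,
\[
B[\mu_t](z) - B[\widetilde\mu_t](z) = \int_{K \times K} \big( b(z, X_t^\mu(x')) - b(z, X_t^{\widetilde\mu}(y')) \big) \, d\pi_0(x',y'),
\]
so that local Lipschitz continuity on $B_R$ and Jensen's inequality give $|B[\mu_t](z) - B[\widetilde\mu_t](z)| \lesssim_R \int |D_t(x',y')| \, d\pi_0 \le A_t^{1/p}$. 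The $S = S_1$ term is treated identically, now through the coupling $\pi_0 \otimes \pi_0$ and the local Lipschitz continuity of $s_1$, and the $C$ term as well. Altogether $|\Delta^B_t|, |\Delta^C_t| \lesssim_R |D_t(x,y)| + A_t^{1/p}$ for $t \le \tau_R$.

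With these bounds, I would apply It\^o's formula to $|D_t|^p$, obtaining a drift controlled by $|D_t|^p + |D_t|^{p-1} A_t^{1/p} + |D_t|^{p-2}|\Delta^C_t|^2$ and a martingale with integrand $\lesssim |D_t|^{p-1}|\Delta^C_t|$. Integrating against $d\pi_0$ and using $\int |D_t|^{p-1} \, d\pi_0 \le A_t^{(p-1)/p}$ together with $\int |D_t|^{p-2}|\Delta^C_t|^2 \, d\pi_0 \lesssim_R A_t$ (H\"older, cleanly for $p \ge 2$; the range $1<p<2$ is addressed below) collapses every cross term to $A_t$, yielding
\[
A_t \le A_0 + C_R \int_0^t A_s \, ds + M_t, \qquad t \le \tau_R,
\]
where $M_t$ is a local martingale with bracket $\lesssim_R \int_0^t A_s^2 \, ds$. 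Taking $\sup_{s \le t \wedge \tau_R}$, passing to expectations and applying the Burkholder--Davis--Gundy inequality bounds $\mathbb{E}\sup_{s \le t \wedge \tau_R} |M_s|$ by $C_R\, \mathbb{E}\big[ ( \sup_{s \le t \wedge \tau_R} A_s \cdot \int_0^{t \wedge \tau_R} A_s \, ds )^{1/2} \big]$; a Young split $\sqrt{XY} \le \tfrac{1}{2}(\eta X + \eta^{-1} Y)$ absorbs the supremum into the left-hand side, leaving $\mathbb{E}\sup_{s \le t \wedge \tau_R} A_s \le C_R ( A_0 + \int_0^t \mathbb{E}\sup_{u \le s \wedge \tau_R} A_u \, ds )$, and Gr\"onwall's lemma closes the estimate.

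I expect the main obstacle to be closing the estimate rather than deriving the pointwise bounds. The crucial idea is to rewrite the $\mu$-dependence of the coefficients through the single plan $\pi_0$, so that the nonlinear differences are measured only by the very quantity $A_t$ being estimated; and the martingale term must be reabsorbed through Burkholder--Davis--Gundy and Young's inequality without producing a term Gr\"onwall cannot handle. Two technical points deserve care: all constants must remain uniform up to $\tau_R$, which is precisely guaranteed by confinement of the characteristics to $B_R$; and the map $z \mapsto |z|^p$ is not $C^2$ at the origin for $1 < p < 2$, so the It\^o step should be run with the regularisation $(\varepsilon + |z|^2)^{p/2}$ and passed to the limit $\varepsilon \to 0$, the second-order term $|D_t|^{p-2}|\Delta^C_t|^2$ requiring the extra care in this range (the exponent bookkeeping is immediate for $p \ge 2$, which is the case used in Theorems \ref{thm2} and \ref{thm3}).
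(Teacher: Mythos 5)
Your proposal is correct and follows essentially the same route as the paper: transport a single optimal plan $\pi_0\in\Pi(\mu_0,\widetilde\mu_0)$ along the two flows, dominate $W_p^p[\mu_t,\widetilde\mu_t]$ by the coupled quantity $A_t$ (the paper's $J_t$), use the local Lipschitz bounds on $B(0,R)$ to reduce the coefficient differences to $|D_t|$ plus $A_t^{1/p}$, apply It\^o and Gr\"onwall, and reabsorb the martingale supremum via Burkholder--Davis--Gundy (the paper closes with H\"older and a quadratic inequality where you use Young's inequality, which is equivalent). The only divergence is that the paper restricts to $p=2$ for clarity, whereas you track general $p$ and correctly flag the non-smoothness of $|z|^p$ for $1<p<2$ as the point needing regularisation.
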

\begin{remark}
Given some sequence $(x_k)_{k \ge 1}$ dense in $K$, the continuity of $x \mapsto X_0^\mu(x) \in {\cal C}$ gives
\begin{align*}
& \left\{ \tau_R \ge t \right\} = 
\left\{  \sup_{x \in K} \sup_{[0,t]}  \Big( |X^\mu_s(x) | + |X^{\widetilde\mu}_s(x) | \Big) \le R \right\}
= \bigcap_{k \ge 1} \left\{ \tau_R(x_k) \ge t \right\}
\end{align*}
where $\tau_R(x) = \inf \left\{ t \ge 0, \;  |X^\mu_t(x)| + |X^{\widetilde \mu}_t(x)|  \ge R  \right\} \wedge T$ is a stopping time, so that $\tau_R$ is indeed a stopping time.
\end{remark}

\begin{remark} \label{work}
The proof of this comparison estimate relies strongly on measures of the transport form \myref{the_transport} which are natural solutions of the stochastic conservation equation \myref{conservation}. Whenever $\sigma \neq 0$, solutions of SPDE \myref{chap2-spde} no longer exhibit a natural "transport form". Moreover, for fixed $N \ge 1$, the empirical measure $\mu^N$ cannot be written as the solution of some SPDE (see \myref{this}). The case of a particle system with independent noise therefore requires additional work.
\end{remark}

\begin{proof}[Proof of Proposition \ref{comparison}]
For the sake of making calculations clearer, we only treat the case $p=2$.
Let us forget about $S[\mu]$ since it plays the same role as $B[\mu]$. Let us introduce a local Lipschitz constant $c_R > 0$ so that for all $|x|, |y|, |x'|, |y'| \le R$,
\begin{align*}
& |b(x,y) - b(x',y')| \le c_R \Big( |x-x'| + |y-y'| \Big) 
\\
&  |c(x,y) - c(x',y')| \le c_R \Big( |x-x'| + |y-y'| \Big) 
\\
&|c(x,y)| \le c_R.
\end{align*}
This easily results in the following for all $|x|, |x'| \le R$, $\nu$, $\nu'$ with support in $B(0,R)$,
\begin{equation}
\begin{array}{l}
 \Big| B[\nu](x) - B[\nu'](x') \Big| 
 \le c_R \Big( |x-x'| + W_1[\nu,\nu'] \Big) \le c_R \Big( |x-x'| + W_2[\nu,\nu'] \Big)
 \\
  \Big| C[\nu](x) - C[\nu'](x') \Big| 
 \le c_R \Big( |x-x'| + W_1[\nu,\nu'] \Big) \le c_R \Big( |x-x'| + W_2[\nu,\nu'] \Big)
 \\
 \Big| C[\nu](x) \Big| \le c_R .
 \end{array}
\label{support_est}
\end{equation}

Using Theorem 4.1 from \cite{villani}, we may introduce an optimal plan $\pi \in \Pi(\mu_0, \widetilde \mu_0)$ so that
\begin{align*}
W_2^2[\mu_0, \widetilde \mu_0] = \int_{K^2} |x-y|^2 d\pi(x,y).
\end{align*}
Since $\mu$, $\widetilde \mu$ are of the transport form, denoting $X_t = X_t^\mu$ and $\widetilde X_t = X_t^{\widetilde \mu}$, introducing the mapping $T : (x,y) \in (\mathbb{R}^d)^2 \mapsto (X_t(x), \widetilde X_t(y)) \in (\mathbb{R}^d)^2$, one can easily see that $T^* \pi \in \Pi(\mu_t, \widetilde \mu_t)$. It follows that
\begin{align}
W_2^2[\mu_t, \widetilde \mu_t] \le J_t:= \int_{K^2} \Big| X_t(x) - \widetilde X_t(y) \Big|^2 d\pi(x,y) .
\end{align}
We now apply Itô's formula to $\eta_t(x,y) = X_t(x) - \widetilde X_t(y)$ to get
\begin{align*}
d | \eta_t(x,y) |^2 & = \left( 2 \eta_t(x,y) \cdot \Big[ B[\mu_t](X_t(x)) - B[\widetilde \mu_t](\widetilde X_t(y)) \Big] + \Big| C[\mu_t](X_t(x)) - C[\widetilde \mu_t] (\widetilde X_t(y)) \Big|^2 \right) dt
\\
& \; \; \; + 2 \eta_t(x,y) \cdot  \Big( C[\mu_t](X_t(x)) - C[\widetilde \mu_t](\widetilde X_t(y)) \Big) d\beta_t .
\end{align*}
Applying \myref{support_est}, we deduce, for some $C_R > 0$,
\begin{align}
d | \eta_{t \wedge \tau_R} (x,y) |^2 & \le C_R \Big( |\eta_{t \wedge \tau_R} (x,y) |^2 + W_2^2[\mu_{t \wedge \tau_R}, \widetilde \mu_{t \wedge \tau_R} ] \Big) dt + dM_{t \wedge \tau_R}(x,y) \nonumber \\
& \le C_R \Big(  |\eta_{t \wedge \tau_R} (x,y) |^2 + J_{t \wedge \tau_R} \Big) dt + dM_{t \wedge \tau_R} (x,y)
\label{bdgg}
\end{align}
with 
$\displaystyle{ M_t(x,y) = \int_0^t 2 \eta_s(x,y) \cdot  \Big( C[\mu_s](X_s(x)) - C[\widetilde \mu_s](\widetilde X_s(y)) \Big) d\beta_s }$. We may integrate this expression with respect do $d\pi(x,y)$ using a stochastic Fubini theorem to get
\begin{align}
d J_{t \wedge \tau_R} \le C_R J_{t \wedge \tau_R} dt + dM_{t \wedge \tau_R}
\label{J_expr}
\end{align}
with $\displaystyle{ M_t = \int_0^t \left( \int_{K^2} 2 \eta_s(x,y) \cdot  \Big( C[\mu_s](X_s(x)) - C[\widetilde \mu_s](\widetilde X_s(y)) \Big) d\pi(x,y) \right) dt } $ . Taking the expectation in \myref{J_expr} and applying Grönwall's lemma leads to
\begin{align*}
\forall t \in [0,T], \; \; \; \E[ J_{t \wedge \tau_R} ] \le C_{R,T} W_2^2[\mu_0,\widetilde \mu_0].
\end{align*}
Coming back to \myref{J_expr} one may now write, using Burkholder-Davis-Gundy's inequality
\begin{align*}
\E \Big[ \sup_{[0,T]} J_{t \wedge \tau_R} \Big] \le C_R \int_0^T \E [ J_{t \wedge \tau_R} ] dt + \E[ \sup_{[0,T]} M_{ t \wedge \tau_R} ] \le C_{R,T} W_2^2[\mu_0, \widetilde \mu_0] + C \E\Big( [M]_{T \wedge \tau_R}^{1/2} \Big) .
\end{align*}
With
\begin{align*}
[M]_{T \wedge \tau_R} & = 4 \int_0^T \Big| \int_{K^2}  \eta_{t \wedge \tau_R} (x,y) \cdot  \Big( C[\mu_{t \wedge \tau_R}](X_{t \wedge \tau_R}(x)) - C[\widetilde \mu_{t \wedge \tau_R}](\widetilde X_{t \wedge \tau_R}(y)) \Big) d\pi(x,y)   \Big|^2 dt \\
& \le 4 c_R \int_0^T \left( \int_{K^2} | \eta_{t \wedge \tau_R}(x,y)| \Big( \eta_{t \wedge \tau_R}(x,y) + W_2[\mu_{t \wedge \tau_R}, \widetilde \mu_{t \wedge \tau_R}] \Big) d\pi(x,y) \right)^2  dt
\\
& \le C_R \int_0^T (J_{t \wedge \tau_R})^2 dt \le C_R \left( \sup_{[0,T]} J_{t \wedge \tau_R} \right) \int_0^T J_{t \wedge \tau_R} dt .
\end{align*}
Hölder's inequality classically gives
\begin{align*}
\E \Big[ \sup_{[0,T]} J_{t \wedge \tau_R} \Big]
\le C_{R,T} W_2^2[\mu_0, \widetilde \mu_0] + C_{R,T} \E \Big[ \sup_{[0,T]} J_{t \wedge \tau_R} \Big]^{1/2} W_2[\mu_0, \widetilde \mu_0]
\end{align*}
from which we easily derive
\begin{align}
\E \Big[ \sup_{t \in [0, \tau_R]} W_2^2[\mu_t, \widetilde \mu_t] \Big] \le \E \Big[ \sup_{[0,T]} J_{t \wedge \tau_R} \Big] \le C_{R,T} W_2^2[\mu_0, \widetilde \mu_0] .
\end{align}

\end{proof}

\begin{remark}
Seeing $\mu = (X)^* \mu_0$ and $\widetilde \mu = (\widetilde X)^* \widetilde \mu_0 $ as random elements of ${\cal P}_p({\cal C})$, we can in fact be a little more precise. With $\pi \in \Pi(\mu_0, \widetilde \mu_0)$ an optimal plan between $\mu_0$ and $\widetilde \mu_0$, we have
\begin{align}
W_p^p[\mu, \widetilde \mu] \le J^*_T:= \int_{K^2} \sup_{t \in [0,T]} \Big| X_t(x) - \widetilde X_t(y) \Big|^p d\pi(x,y)
\end{align}
and one could easily adapt the proof (apply Burkholder-Davis-Gundy's inequality in \myref{bdgg} before integrating) to get the estimate
\begin{align}
\E \left[ J^*_{\tau_R} \right] \le C_{p,R,T} W_p^p[\mu_0, \widetilde \mu_0] .
\label{ineq_J}
\end{align}
\end{remark}
~~\\
The result from Proposition \ref{comparison} makes it clear that, given a compactly-supported measure $\mu_0$, one should naturally require some estimates regarding the growth of the support of $\mu_t$, that is, estimates on $\sup_{x \in K} |X^\mu_t(x)|$. 
In \cite{choi-salem} and \cite{jung-ha} for instance, where the diffusion coefficient $c(x,y)$ is linear, precise almost-sure estimates for the support of $\mu_t$ are achieved using some stochastic Grönwall inequality. 

The assumptions from Theorem \ref{chap2-thm2} provide another setting (where, in particular, the diffusion coefficient is bounded) in which we are able to obtain a bound on the moments 
$$\E \left[ \sup_{x \in K} \sup_{[0,T]} |X^\mu_t(x)|^p \right], \; \; \; p \ge 1.$$

\end{subsection}

\begin{subsection}{Properties of the coefficients.} \label{detailed_assumptions}

From this point on, we suppose that Assumptions \ref{hyp1'} and \ref{hyp2'} are satisfied.
\vspace{3mm}

Note that, as mentioned in the introduction, these assumptions allow us to consider stochastic Cucker-Smale models with "truncated velocities" in the interaction perturbation, given by \myref{pert4}. Indeed, this corresponds to coefficients of the form
\begin{equation}
b((x,v);(y,w)) = \left(
\begin{array}{c}
v \\
\psi(x-y)(w-v)
\end{array}
\right)
\; \; \; \;
c((x,v) ; (y,w) ) =
\left(
\begin{array}{c}
0 \\
\phi(x-y){\cal R}(w-v)
\end{array}
\right) .
\end{equation}
Provided that the weight functions $\psi, \phi$ and the truncation function $\cal R$ satisfy, for some $\theta \in [0,1)$,
\begin{align*}
& |\psi(x)|, \; \;  |\phi(x)|, \; \;  |{\cal R}(v)| \lesssim 1,
\\
& |\psi(x) - \psi(y) | \lesssim (1 + |x|^\theta + |y|^\theta) |x-y|,
\\
& |\phi(x) - \phi(y) | \lesssim (1 + |x|^\theta + |y|^\theta) |x-y|,
\\
& |{\cal R}(v) - {\cal R}(w) | \lesssim (1 + |v|^\theta + |w|^\theta) |v-w|,
\\
& | \nabla {\cal R}(v) - \nabla {\cal R}(w) | \lesssim (1 + |v|^{2 \theta} + |w|^{2\theta} ) |v-w|,
\end{align*}
one can check that all the required assumptions are satisfied, with, denoting $z_i = (x_i,v_i)$,
\begin{equation*}
s_1(z_1,z_2,z_2) = \left(
\begin{array}{c}
0
\\
- \phi(x_1-x_2) \phi(x_1-x_3) \nabla {\cal R}(v_2-v_1) {\cal R}(v_3-v_1)  
\\
\hspace{20mm} + \phi(x_1-x_2) \phi(x_2-x_3) \nabla {\cal R}(v_2-v_1) {\cal R}(v_3-v_2)
\end{array}
\right) .
\end{equation*}

From Assumptions \ref{hyp1'} and \ref{hyp2'}, we easily derive
\begin{align}
& \Big| B[\mu](x) | \lesssim 1 + |x| + \int |y| d\mu(y)
\label{sub_B}
\\
& \Big| B[\mu](x) - B[\mu](x') \Big| \lesssim \Big( 1 + |x|^{2\theta} + |x'|^{2\theta} + \int |y|^{2 \theta} d\mu(y) \Big) |x-x'|  
\label{lip_B}
\end{align}
and similar estimates for $S[\mu]$, as well as
\begin{align}
& \Big| C[\mu](x) \Big| \lesssim 1 
\label{sub_C}
\\
& \Big| C[\mu](x) - C[\mu](x') \Big| \lesssim \Big( 1 + |x|^\theta + |x'|^\theta + \int |y|^\theta d\mu(y) \Big) |x-x'|  .
\label{lip_C}
\end{align}

\end{subsection}

\begin{subsection}{Estimates for the stochastic characteristics.} \label{section_chara2}

In this context, let us start by establishing some exponential moments for the stochastic ~~\\ characteristics.
\begin{lemma}[Exponential moments]  \label{prop_exp}~~\\
Let $\mu_0 \in {\cal P}_2({\mathbb{R}^d})$ and $\mu : \Omega \to {\cal P}_2({\cal C})$ be of the transport form. ~~\\
Then, for all $T > 0$, $\alpha_0 \in (0,1]$, there exists $\alpha_T > 0$  and $C_T > 0$ such that for $t \in [0,T]$,
$x \in \mathbb{R}^d$,
\begin{align}
 \E \left[ 
\exp\left(\alpha_T \left( |X^\mu_t(x)|^2 + \int |y|^2 d\mu_t(y) \right) \right) \right] \le
 C_T \exp \left( \alpha_0 \left( |x|^2 + \int |y|^2 d\mu_0(y) \right) \right) .
\end{align}
\end{lemma}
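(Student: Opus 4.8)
The key structural feature to exploit is the boundedness of the diffusion coefficient $C$ from Assumption \ref{hyp1'}, which is what makes exponential moments available at all. Write $Y_t = X^\mu_t(x)$. Since $\mu$ is of the transport form, $\mu_t = (X^\mu_t)^*\mu_0$ and hence $\int |y|^2 d\mu_t(y) = \int_{\mathbb{R}^d} |X^\mu_t(z)|^2 d\mu_0(z)$; I set
$$\Phi_t = |Y_t|^2 + \int_{\mathbb{R}^d} |X^\mu_t(z)|^2 d\mu_0(z),$$
so that the statement is exactly an exponential bound for $\Phi_t$, with $\Phi_0 = |x|^2 + \int |y|^2 d\mu_0(y)$. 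The plan is to obtain a single scalar It\^o equation $d\Phi_t = \lambda_t\, dt + \sigma_t\, d\beta_t$. Applying It\^o's formula to $|X^\mu_t(z)|^2$ along the characteristics \myref{chara} (recall $\sigma = 0$ by Assumption \ref{hyp_common}) and integrating against $d\mu_0(z)$ through a stochastic Fubini theorem, justified by the moment estimate \myref{estimate_mu}, yields such an equation, both $|Y_t|^2$ and the integral term being driven by the single common noise $\beta$.

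Two estimates are then required. First, by the sublinearity \myref{sub_B} of $B$ (and its analogue for $S$), Young's inequality, and the Jensen bound $(\int |y|\, d\mu_t)^2 \le \int |y|^2\, d\mu_t$, the drift satisfies $\lambda_t \lesssim 1 + \Phi_t$. Second, and decisively, the boundedness \myref{sub_C} of $C$ gives
$$\sigma_t = 2 Y_t \cdot C[\mu_t](Y_t) + \int_{\mathbb{R}^d} 2 X^\mu_t(z) \cdot C[\mu_t](X^\mu_t(z))\, d\mu_0(z),$$
whence $|\sigma_t| \lesssim |Y_t| + \big(\int |X^\mu_t(z)|^2 d\mu_0(z)\big)^{1/2} \lesssim \Phi_t^{1/2}$, so that the quadratic variation density obeys $\sigma_t^2 \lesssim \Phi_t$. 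It is precisely here that Assumption \ref{hyp1'} is used: had $C$ been merely sublinear, $\sigma_t^2$ would scale like $\Phi_t^2$ and the exponential moment would be destroyed.

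Finally, let $C_0$ be a constant dominating both estimates and let $a : [0,T] \to (0,\alpha_0]$ solve the Riccati-type ODE $\dot a(t) = -C_0\big(a(t) + \tfrac12 a(t)^2\big)$ with $a(0) = \alpha_0$; since the right-hand side is negative for $a > 0$, $a$ is positive and decreasing on $[0,T]$. It\^o's formula applied to $u_t = \exp(a(t)\Phi_t)$ produces a drift bounded by
$$u_t \Big[ \big( \dot a(t) + C_0 a(t) + \tfrac12 C_0 a(t)^2 \big) \Phi_t + C_0 a(t) \Big],$$
in which the coefficient of $\Phi_t$ vanishes by the choice of $a$, leaving a drift $\le C_0 a(t) u_t \le C_0 u_t$ (as $a(t) \le \alpha_0 \le 1$). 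Writing $u_t = u_0 + \int_0^t \lambda^u_s\, ds + N_t$ with $\lambda^u_s \le C_0 u_s$ and $N_t$ a local martingale, I localize with $\tau_n = \inf\{t : \Phi_t \ge n\}$, which increase to $T$ almost surely by the moment bounds, so that $N_{\cdot \wedge \tau_n}$ is a genuine martingale; taking expectations, then Gr\"onwall's lemma, then Fatou's lemma as $n \to \infty$ gives $\E[u_t] \le e^{C_0 T} u_0 = e^{C_0 T}\exp(\alpha_0 \Phi_0)$. As $a$ is decreasing, $\alpha_T := a(T) \le a(t)$ for $t \le T$, so $\exp(\alpha_T \Phi_t) \le u_t$ and the claim follows with $C_T = e^{C_0 T}$.

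The main obstacle is this localization and integrability step, ensuring the exponential local martingale $N_t$ genuinely drops out in expectation; this is exactly what the bound $\sigma_t^2 \lesssim \Phi_t$ (that is, the boundedness of $C$) makes possible, the drift and volatility bookkeeping being otherwise routine.
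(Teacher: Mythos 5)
Your proposal is correct and follows essentially the same route as the paper: write the scalar It\^o equation for $\Phi_t = |X^\mu_t(x)|^2 + \int |y|^2 d\mu_t(y)$, note that the drift is $\lesssim 1+\Phi_t$ and (crucially, via the boundedness of $c$ from Assumption \ref{hyp1'}) the squared volatility is $\lesssim 1+\Phi_t$, then apply It\^o's formula to $\exp(a(t)\Phi_t)$ with a decreasing exponent $a(t)$ chosen so that the Riccati-type coefficient of $\Phi_t$ in the drift is nonpositive, localize, and conclude by Gr\"onwall. The only cosmetic difference is that the paper takes $a(t)=\alpha_0 e^{-2Ct}$ satisfying the differential inequality rather than solving the Riccati ODE exactly.
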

\begin{proof}[Proof]
This method is inspired from the one developed in \cite{bolley}, Lemma 3.5.
Again, let us forget $S[\mu]$ since it satisfies the same estimates as $B[\mu]$. Itô's formula gives
\begin{align*}
& d \Big[ |X^\mu_t(x)|^2 \Big]  = \left( 2 X_t^{\mu}(x) \cdot B[\mu_t] (X^\mu_t(x)) + |C[\mu_t](X^\mu_t(x))|^2 \right) dt + 2 X_t^\mu(x) \cdot C[\mu_t](X^\mu_t(x))  d\beta_t   .
\end{align*}
Integrating with respect to $d\mu_0(x)$ leads to
\begin{align*}
d \Big[ \int |y|^2 d\mu_t(y) \Big] =   \Big[ \int \Big(2 y \cdot B[\mu_t](y) + |C[\mu_t](y)|^2  \Big)  d\mu_t(y) \Big] dt
+  \Big[ \int \Big(2 y \cdot C[\mu_t](y) \Big) d\mu_t(y) \Big] d\beta_t .
\end{align*}
Hence, letting $Y_t = |X^\mu_t(x)|^2 + \int |y|^2 d\mu_t$ and summing these two identities, we get
\begin{align*}
 dY_t = a_t dt + \sigma_t d\beta_t
 \end{align*}
 with, using \myref{sub_B} and \myref{sub_C}, $| a_t |, |\sigma_t|^2 \lesssim (1 + Y_t)$.
Let $\alpha(t)$ be a deterministic, positive smooth function to be fixed later on.
Letting $Z_t = \exp\Big(\alpha(t) Y_t \Big)$, it follows that, for some $C > 0$,
\begin{align*}
d Z_t  & = Z_t \Big( \alpha'(t) Y_t + \alpha(t) a_t + \frac{\alpha(t)^2}{2} \sigma_t^2 \Big) dt + Z_t \alpha(t) 
\sigma_t d\beta_t
\\
& \le Z_t \Big(  \alpha'(t) Y_t + C(1 + Y_t) (\alpha(t) + \alpha(t)^2) \Big) dt + Z_t \alpha(t) 
\sigma_t d\beta_t \\
& = Z_t  Y_t \Big( \alpha'(t) + C \alpha(t) + C \alpha^2(t)  \Big) dt + C Z_t \Big( \alpha(t) + \alpha(t)^2 \Big) dt 
+ Z_t \alpha(t) \sigma_t d\beta_t .
\end{align*}
Choosing $\alpha(t)$ so that $\alpha' + C \alpha + C \alpha^2 \le 0$, that is for instance $\alpha(t) = \alpha_0 e^{-2Ct}$, we are led to
\begin{align}
dZ_t \le C \left( \alpha_0 + \alpha_0^2 \right) Z_t dt + Z_t \alpha(t) \sigma_t d\beta_t,
\label{Z}
\end{align}
hence taking the expectation (again, one may use a stopping time to be more rigorous) and applying Grönwall's lemma gives
\begin{align*}
\E \Big[ Z_t \Big] \le \exp \left(C ( \alpha_0 + \alpha_0^2 ) T \right)\exp \left( \alpha_0 \left( |x|^2 + \int |y|^2 d\mu_0(y) \right) \right) .
\end{align*}
This is the expected result with $\alpha_T = \inf_{t \in [0,T]} \alpha(t) = \alpha_0 e^{-2CT}$ and $C_T = \exp \left(C(\alpha_0 + \alpha_0^2) T\right)$. \end{proof}
 
We can now establish some bounds regarding the support of $\mu_t$.

\begin{proposition}[Kolmogorov continuity estimates for the stochastic characteristics] 
\label{chap2-kolmo}
~~\\
Let $\mu_0 \in {\cal P}(\mathbb{R}^d)$ be supported in some compact set $K \subset \mathbb{R}^d$, and $\mu : \Omega \to {\cal P}_2({\cal C})$ be of the transport form. For all $T > 0$, $p > 1$, there exists a constant $C_{K,T,p}$ such that
\begin{align*}
\forall x,x' \in K, \; \; \; \; \E \left[ \sup_{t \in [0,T]} | X^\mu_t(x) - X^\mu_t(x') |^p \right] \le C_{K,T,p} |x-x'|^p .
\end{align*}
\end{proposition}
Using Kolmogorov's continuity theorem, one can then bound all the moments of the $\alpha$-Hölder constant
\begin{align*}
N_\alpha(X^\mu) = \sup_{x,x' \in K} \sup_{t \in [0,T]} \frac{ |X^\mu_t(x) - X^\mu_t(x')|}{|x-x'|^\alpha}
\end{align*}
for all $\alpha \in (0,1)$. The set $K$ being compact, an immediate consequence is the following.

\begin{corollary} \label{coro}
Let $\mu_0 \in {\cal P}(\mathbb{R}^d)$ be supported in some compact set $K \subset \mathbb{R}^d$, and $\mu : \Omega \to {\cal P}_2({\cal C})$ be of the transport form. For all $T > 0$, $p > 1$, there exists a constant $C_{K,T,p}$ such that
\begin{align*}
\E \left[ \sup_{x \in K} \sup_{[0,T]} | X_t^\mu(x)|^p \right] \le C_{K,T,p} .
\end{align*}
\end{corollary}

\begin{proof}[Proof of Proposition \ref{chap2-kolmo}]
Let us once again forget $S[\mu]$. Letting $\eta_t = X^\mu_t(x) - X^\mu_t(x')$, Itô's formula gives
\begin{align*}
d | \eta_t |^{2p} = 2p |\eta_t|^{2p-2} \eta_t \cdot d\eta_t + 2p(p-1) \sum_{i,j=1}^d |\eta_t|^{2p-4} \eta_t^i \eta_t^j d[\eta^i, \eta^j]_t + p \sum_{i=1}^d |\eta_t|^{2p-2} d[\eta^i]_t
\end{align*}
with
\begin{align*}
& d\eta_t = \left( B[\mu_t](X^\mu_t(x)) - B[\mu_t](X^\mu_t(x')) \right) dt + \left( C[\mu_t](X^\mu_t(x)) - C[\mu_t](X^\mu_t(x')) \right) d\beta_t
\\
& d [\eta^i, \eta^j ]_t =  \left( C^i[\mu_t](X^\mu_t(x)) - C^i[\mu_t](X^\mu_t(x')) \right)  \left( C^j[\mu_t](X^\mu_t(x)) - C^j[\mu_t](X^\mu_t(x')) \right) dt .
\end{align*}
Using the Lipschitz estimates from \myref{lip_B} and \myref{lip_C}, we derive that, for some $C_* \ge 1$,
\begin{align}
& d |\eta_t|^{2p} \le \lambda_t |\eta_t|^{2p} dt + dM_t, \; \; \text{ with} \label{eta_expr}
\\
& \lambda_t \equiv \lambda_t(x,x') := C_* \left( 1 + |X^\mu_t(x)|^{2\theta} + |X^\mu_t(x')|^{2\theta} + \int |y|^{2 \theta} d\mu_t \right),  \label{lambda_expr} \\
& M_t = \int_0^t 2p |\eta_s|^{2p-2} \eta_s \cdot \left( C[\mu_s](X^\mu_s(x)) - C[\mu_s](X^\mu_s(x')) \right) d\beta_s .
\end{align}
Let us define 
\begin{align}
\Lambda_t \equiv \Lambda_t(x,x'):= \gamma \int_0^t \lambda_s(x,x') ds,
\end{align}
where the consant $\gamma \ge 1$ is to be fixed later on. One can now write
\begin{align}
 \E \left[ \sup_{[0,T]} | \eta |^p \right] & \le \E \left[ \exp\left(\frac{\Lambda_T}{2}\right)  \sup_{[0,T]}  \left( \exp \left(-\frac{\Lambda_t}{2} \right) |\eta_t|^{p} \right) \right]  \nonumber \\
& \le \E \Big[ \exp(\Lambda_T) \Big]^{1/2} \E \Big[ \sup_{[0,T]}  \left( \exp \left(-\Lambda_t \right) |\eta_t|^{2p} \right) \Big]^{1/2} .
\label{holderisation}
\end{align}
Let us fix $\alpha_0 = 1$ and introduce $\alpha_T > 0$ such that the estimate from Proposition \ref{prop_exp} holds. Then,
\begin{align}
 \E \Big[ \exp(\Lambda_T) \Big] & \le \int_0^T \E \Big[ \exp(\gamma \lambda_t ) \Big] dt \nonumber
 \\
 &
 =  \int_0^T \E \Big[ \exp \left(\gamma C_* \left( 1 + |X^\mu_t(x)|^{2\theta} + |X^\mu_t(x')|^{2\theta} + \int |y|^{2 \theta} d\mu_t  \right) \right) \Big] dt \nonumber
 \\
 & \le C_\#  \int_0^T \E \Big[ \exp \left( \frac{\alpha_T}{2} \left( |X^\mu_t(x)|^2 + |X^\mu_t(x')|^2 + \int |y|^2 d\mu_t(y) \right)  \right) \Big] dt, \label{exp_ineq}
\end{align}
where the constant $C_\# = C_\#(T,\gamma) > 0$ is chosen large enough so that  (recall that $\theta \in [0,1)$)
\begin{align*}
\forall u \in \mathbb{R}^+, \; \; \; \exp \left( \gamma C_* (1 + u^{2 \theta}) \right) \le C_\# \exp\left( \frac{\alpha_T}{2} u^2 \right) .
\end{align*} 
We may now use Hölder's inequality in \myref{exp_ineq} and apply the estimate from Proposition \ref{prop_exp} to conclude
\begin{align}
\E \Big[ \exp(\Lambda_T) \Big] \le C_{K,T} . \label{big_exp}
\end{align}
Combining \myref{holderisation} and \myref{big_exp}, it only remains to prove that
\begin{align}
 \E \Big[ \sup_{[0,T]}  \left( \exp \left(-\Lambda_t \right) |\eta_t|^{2p} \right) \Big] \le C_{K,T} |x-x'|^{2p} .
\label{remain}
\end{align}
By design, we derive from \myref{eta_expr} with Itô's formula that
\begin{align*}
\exp(-\Lambda_t) |\eta_t|^{2p} \le |x-x'|^{2p} + \int_0^t \exp(-\Lambda_s) dM_s
\end{align*}
so that, denoting $N_t$ the martingale term, with Burkholder-Davis-Gundy's inequality,
\begin{align}
\E \Big[ \sup_{[0,T]}  \left( \exp \left(-\Lambda_t \right) |\eta_t|^{2p} \right) \Big] 
\lesssim |x-x'|^{2p} + \E \left( [N]_T^{1/2} \right) .
\label{BDG_again}
\end{align}
This quadratic variation is given by
\begin{align*}
[ N ]_T & = 4p^2 \int_0^T \exp(-2 \Lambda_t) \Big|
|\eta_s|^{2p-2} \eta_s \cdot \left( C[\mu_s](X^\mu_s(x)) - C[\mu_s](X^\mu_s(x')) \right) \Big|^2 dt
\\
& \lesssim \int_0^T \exp(-2 \Lambda_t) \lambda_t |\eta_t|^{4p} dt \lesssim  
\sup_{[0,T]}  \left( \exp \left(-\Lambda_t \right) |\eta_t|^{2p} \right) \int_0^T \exp \left(-\Lambda_t \right) \lambda_t |\eta_t|^{2p} dt,
\end{align*}
so that \myref{BDG_again} leads to
\begin{align*}
& \E \Big[ \sup_{[0,T]}  \left( \exp \left(-\Lambda_t \right) |\eta_t|^{2p} \right) \Big] \\
& \hspace{5mm} \lesssim |x-x'|^{2p} 
 + \E \Big[ \sup_{[0,T]}  \left( \exp \left(-\Lambda_t \right) |\eta_t|^{2p} \right) \Big]^{1/2} \left( \int_0^T \E \Big[ \exp \left(-\Lambda_t \right) \lambda_t |\eta_t|^{2p} \Big] dt \right)^{1/2} .
\end{align*}
The estimate \myref{remain} will therefore hold if we can establish
\begin{align}
\forall t \in [0,T], \; \; \; \E \Big[ \exp \left(-\Lambda_t \right) \lambda_t |\eta_t|^{2p} \Big] \le C_{K} |x-x'|^{2p} .
\end{align}
The integration by part formula gives
\begin{align}
d \left[  \exp \left(-\Lambda_t \right) \lambda_t |\eta_t|^{2p} \right] & = \exp \left(-\Lambda_t \right) 
\left( |\eta_t|^{2p} d\lambda_t + \lambda_t d ( |\eta_t|^{2p} ) + d[\lambda, |\eta|^{2p} ]_t - \Lambda'_t \lambda_t |\eta_t|^{2p} dt \right) \nonumber
\\
& = \exp \left(-\Lambda_t \right) 
\left( |\eta_t|^{2p} d\lambda_t + \lambda_t d ( |\eta_t|^{2p} ) + d[\lambda, |\eta|^{2p} ]_t - \gamma \lambda_t^2 |\eta_t|^{2p} dt \right) .
\label{drift}
\end{align}
Given that 
$\E \left[  \exp \left(-\Lambda_0 \right) \lambda_0 |\eta_0|^{2p} \right] \le C_K |x-x'|^{2p} $, 
it is enough to prove that the drift terms in \myref{drift} are all negative for $\gamma$ chosen large enough. 
~~\\
Recalling \myref{eta_expr},  $ \lambda_t d ( |\eta_t|^{2p} ) \lesssim \lambda_t^2 |\eta_t|^{2p} dt + \lambda_t dM_t$.~~\\
Moreover, using the sublinearity Assumption~\ref{hyp2'}, we easily get from the expression \myref{lambda_expr}
\begin{align}
& d \lambda_t \lesssim \lambda_t + dm_t(x) + dm_t(x') + \int dm_t(y) d\mu_0(y),
\\
& \text{ where } dm_t(y) = 2\theta |X_t^\mu(y)|^{2 \theta - 2} X_t^\mu(y) \cdot C[\mu_t](X^\mu_t(y)) d\beta_t,
\end{align}
so that $ |\eta_t|^{2p} d\lambda_t \lesssim  \lambda_t |\eta_t|^{2p} dt + |\eta_t|^{2p} \left( dm_t(x) + dm_t(x') + \int dm_t(y) d\mu_0(y) \right)$. ~~\\
We conclude by noting that 
\begin{align*}
d[\lambda, |\eta|^{2p} ]_t = d \Big[ m(x) + m(x') + \int m(y) d\mu_0(y), M \Big]_t \lesssim  \lambda_t |\eta_t|^{2p} dt
\end{align*}
since for all $y \in K$, $d \Big[ m(y) , M \Big]_t$ can be written
\begin{align*}
& \left( 2\theta |X_t^\mu(y)|^{2 \theta - 2} X_t^\mu(y) \cdot C[\mu_t](X^\mu_t(y)) \right) 
\left(   
2p |\eta_t|^{2p-2} \eta_t \cdot \left( C[\mu_t](X^\mu_t(x)) - C[\mu_t](X^\mu_t(x')) \right) 
\right) dt
\\
& \lesssim | X_t^\mu(y)|^{2 \theta-1} \left( 1 + |X_t^\mu(x)|^\theta + |X_t^\mu(x')|^\theta + \int |z|^\theta d\mu_t(z)  \right) |\eta_t|^{2p} dt
\\
& \lesssim \left( 1 + |X_t^\mu(x)|^{2 \theta + \theta-1} +  |X_t^\mu(x')|^{2 \theta + \theta-1} 
+  |X_t^\mu(y)|^{2 \theta + \theta-1}
+ \int |z|^{2\theta + \theta-1} d\mu_t(z)  
 \right) |\eta_t|^{2p}dt
 \\
 & \lesssim \left( 1 + |X_t^\mu(x)|^{2 \theta} +  |X_t^\mu(x')|^{2 \theta} 
+  |X_t^\mu(y)|^{2 \theta}
+ \int |z|^{2\theta} d\mu_t(z)  
 \right) |\eta_t|^{2p}dt .
\end{align*}
Note that $\lambda_t \ge 1$ so that $\lambda_t \le \lambda_t^2$. The proof is complete.
\end{proof}

\end{subsection}

\begin{subsection}{Proof of the strong convergence.} \label{chap2-strong}

Now that the support estimate from Corollary \ref{coro} is acquired, we can prove Theorem \ref{chap2-thm2}.
\vspace{3mm}

Let us fix some $T > 0$ and $p \ge 2$.  
Let $K \subset \mathbb{R}^d$ be a compact set, $\mu_0 \in {\cal P}(\mathbb{R}^d)$ be a probability measure with support in $K$, and $(\mu_0^N = \frac{1}{N} \sum_{i=1}^N \delta_{x_0^{i,N}})_N$ be a sequence of empirical measures with support in $K$ such that
\begin{align*}
W_p[\mu_0^N, \mu_0] \xrightarrow[N \to \infty]{} 0.
\end{align*}
Letting $(X_t^{i,N})_{t \in [0,T]}$ be the solution of \myref{model} with initial data $X_0^{i,N} = x_0^{i,N}$, let us introduce $\mu^N = \frac{1}{N} \sum_{i=1}^N \delta_{X^{i,N}}$ the empirical measure associated with the particle system, which is naturally of the transport form: $\mu^N = (X^{\mu^N})^*\mu_0^N$. ~~\\
Consequently,
for $N,M \ge 1$, Proposition \ref{comparison} gives, for all $R > 0$,
\begin{align*}
\E \left[ \sup_{t \in [0, \tau^{N,M}_R]} W_p^p[\mu_t^N, \mu_t^{M}] \right] \le C_{p,R,T} W_p^p[\mu_0^N, \mu_0^M ]
\end{align*}
where the stopping time is given by
\begin{align}
\tau_R^{N,M} = \inf \left\{ t \ge 0, \; \sup_{x \in K} \Big( |X^{\mu^N}_t(x)| + |X^{\mu^M}_t(x)| \Big) \ge R  \right\} \wedge T .
\end{align}
Let us in fact be more precise and use the inequality \myref{ineq_J}: introducing an optimal plan $\pi^{N,M} \in \Pi(\mu_0^N, \mu_0^M)$ so that
\begin{align*}
W_p^p[\mu^N_0, \mu^M_0] = \int_{K^2} |x-y|^p d\pi^{N,M}(x,y),
\end{align*}
we have
\begin{align}
W_p^p[\mu^N, \mu^M] \le J^*_T:= \int_{K^2}  \sup_{t \in [0,T]} \Big| X_t^{\mu^N}(x) - X_t^{\mu^M}(y) \Big|^p d\pi^{N,M}(x,y),
\end{align}
with the inequality
\begin{align}
\E \left[ J^*_{\tau_R^{N,M}} \right] \le C_{p,R,T} W_p^p[\mu_0^N, \mu_0^M].
\end{align}
It then follows that
\begin{align}
\E \left[ W_p^p[\mu^N, \mu^{M}] \right]  & \le \E \left[ J^*_T \right]  \le     \label{trick}
\E \left[ J^*_T  \mathbb{1}_{\left\{\tau_R^{N,M} = T \right\}} \right] 
+
\E \left[ J^*_T \mathbb{1}_{ \left\{ \tau_R^{N,M} < T \right\} } \right] 
\\ \nonumber
& \le \E \left[ J^*_{\tau^{N,M}_R} \right] 
+ C_p \E \left[ \mathbb{1}_{ \left\{ \tau^{N,M}_R < T \right\} } 
\Big( \int_{x \in {\cal C}} \| x \|_\infty^p d\mu^N(x) + \int_{x \in {\cal C}} \| x \|_\infty^p d\mu^M(x) \Big) \Big]
 \right]
\\ \nonumber
& \le C_{p,R,T} W_p^p[\mu_0^N, \mu_0^M] + C_p \Proba\left( \tau^{N,M}_R < T \right)^{1/2}
 \left( \sup_N \E \left[ \int_{x \in {\cal C}} \| x \|^{2p}_\infty d\mu^N(x) \right] \right)^{1/2} .
\end{align}
Therefore, using the bound from Proposition \ref{chap2-moments},
\begin{align*}
\E \left[ W_p^p[\mu^N, \mu^{M}] \right]
  \le C_{p,R,T} W_p^p[\mu_0^N, \mu_0^M]  + C_{p,K} \Proba\left( \tau^{N,M}_R < T \right)^{1/2}  .
\end{align*}
Now using Corollary \ref{coro}, Markov's inequality leads to
\begin{align*}
\Proba\left( \tau^{N,M}_R < T \right) \le \Proba \left( \sup_{t \in [0,T]} \sup_{x \in K} \left( |X_t^{\mu^N}(x)| + |X^{\mu^M}_t(x)| \right) \ge R \right) \le C_{p,K} R^{-2} .
\end{align*}
We conclude that
\begin{align*}
\limsup_{N,M \to \infty}  \E \left[ W_p^p[\mu^N, \mu^{M}] \right] \le C_{p,K} R^{-1}
\xrightarrow[R \to 0]{} 0.
\end{align*}
This shows that $(\mu^N)_{N \ge 1}$ is a Cauchy sequence, hence converges in $L^p(\Omega ; {\cal P}_p({\cal C}))$ to some $\mu$. 
\vspace{3mm}

Let us now prove that the limiting measure $\mu$ is also of the transport form and therefore satisfies the SPDE \myref{chap2-spde} with initial data $\mu_0$ (according to Proposition \ref{transport_solution}). ~~\\
Denoting $\nu = (X^\mu)^* \mu_0 \in {\cal P}_p({\cal C})$, 
we may slightly adapt the proof of Proposition \ref{comparison} to obtain an estimate of $W_p^p[\mu^N,\nu]$. Indeed, introducing an optimal plan $\pi^N \in \Pi(\mu_0^N, \mu_0)$ so that
\begin{align*}
W_p^p[\mu_0^N, \mu_0] = \int_{K^2} |x-y|^p d\pi^N(x,y),
\end{align*}
we have this time
\begin{align}
W_p^p[\mu^N, \nu] \le I^*_T:= \int_{K^2} \sup_{t \in [0,T]} \Big| X_t^{\mu^N}(x) - X_t^{\mu}(y) \Big|^p d\pi^N(x,y)
\label{def_I}
\end{align}
and we are naturally led to study $\eta_t(x,y) = X_t^{\mu^N}(x) - X_t^{\mu}(y) $. Setting $p=2$ for simplicity, calculations give, as in \myref{bdgg},
\begin{align*}
d |\eta_{t \wedge \tau^N_R}(x,y)|^2 & \le C_R \left( |\eta_{t \wedge \tau^N_R}(x,y)|^2 + W_2^2[\mu^N_{t \wedge \tau_R} , \mu_{t \wedge \tau^N_R}]  \right) dt + dM_{t \wedge \tau^N_R}(x,y)
\\ & \le C_R \left( |\eta_{t \wedge \tau^N_R}(x,y)|^2 + W_2^2[\mu^N , \mu]  \right) dt + dM_{t \wedge \tau^N_R}(x,y) 
\end{align*}
where the stopping time is defined as
\begin{align*}
\tau_R^{N} = \inf \left\{ t \ge 0, \; \sup_{x \in K} \Big( |X^{\mu^N}_t(x)| + |X^{\mu}_t(x)| \Big) \ge R  \right\} \wedge T .
\end{align*}
We can now carry on as in the proof of Proposition \ref{comparison} to obtain the estimate
\begin{align}
\E \left[I^*_{\tau^N_R} \right] \le C_{p,R,T} \left( W_p^p[\mu_0^N, \mu_0] +\E \left[ W_p^p[\mu^N, \mu] 
\right]  \right)
\le C'_{p,R,T} \E \left[ W_p^p[\mu^N, \mu] \right]  .
\label{thisJ}
\end{align}
From \myref{def_I} and \myref{thisJ}, using the same method as in \myref{trick}, since $\mu_0^N, \mu_0$ are supported in $K \subset \mathbb{R}^d$, we are led to
\begin{align*}
\E \left[ W_p^p[\mu^N, \nu] \right] \le C_{p,R,T}  \E \left[ W_p^p[\mu^N, \mu] \right] + 
C_{p,K} \Proba \left( \tau^N_R < T \right)^{1/2} .
\end{align*}
Sending $N, R \to \infty$ in the same fashion as before, we conclude that $\E \left[ W_p^p[ \mu, \nu ] \right] = 0$, that is $\mu = \nu$ almost surely: therefore $\mu$ is of the transport form.
\vspace{3mm}

Lastly, the uniqueness of a solution of the transport form for $\mu_0$ supported in $K \subset \mathbb{R}^d$ is again obtained in the same way: given $\mu = (X^\mu)^* \mu_0$ and $\widetilde \mu = (X^{\tilde \mu})^* \mu_0$ two solutions of the transport form, we may apply Proposition \ref{comparison} (or more precisely equation \myref{ineq_J}) and the method used in \myref{trick} to obtain
\begin{align*}
 \E \left[ W_p^p[ \mu, \widetilde \mu] \right] \le C_{p,K} \Proba \left( \tau_R < T \right)^{1/2}
\end{align*} 
with
\begin{align*}
\tau_R = \inf \left\{ t \ge 0, \; \sup_{x \in K} \Big( |X^{\mu}_t(x)| + |X^{\widetilde \mu}_t(x)| \Big) \ge R  \right\} \wedge T .
\end{align*}
We may then send $R \to \infty$ to get $\mu = \widetilde \mu$ almost surely. 
This concludes the proof of Theorem \ref{chap2-thm2}.

\end{subsection}

\begin{subsection}{Propagation of chaos.}
We now prove Theorem \ref{thm3}. To this purpose, we naturally extend the definition of the stochastic characteristics introduced in \myref{chap2-chara} in the following way. ~~\\
Given some random measure $\mu : \Omega \to {\cal P}_2({\cal C})$ with $\E \Big[ \int \| x \|_\infty^2 d\mu(x) \Big] < \infty$ and an ${\cal F}_0$-measurable random variable $\xi_0 : \Omega \to \mathbb{R}^d$, we shall denote by $X^\mu(\xi_0)=(X^\mu_t(\xi_0))_{t \in [0,T]}$ the solution of
\begin{equation}
\left\{
\begin{array}{l}
 dX_t^\mu(\xi_0) = \Big( B[\mu_t] + S[\mu_t] \Big)(X^\mu_t(\xi_0)) dt + C[\mu_t](X^\mu_t(\xi_0)) d\beta_t, \; \; t \in [0,T],
\\
 X_0^\mu(\xi_0) = \xi_0.
 \end{array}
 \right.
\label{chara2}
\end{equation}
Let us start by establishing a link between the law of $X^\mu(\xi_0)$ given by \myref{chara2} and the flow of characteristics defined in \myref{chap2-chara}.

\begin{proposition} \label{conditional_law}
~~\\
Let $\mu : \Omega \to {\cal P}_2({\cal C})$ such that $\E \int \|x\|^2_\infty d\mu(x) < \infty$. ~~\\
Let $\xi_0 : \Omega \to \mathbb{R}^d$ be an ${\cal F}_0$-measurable random variable with law $\mu_0$ supported in some $K \subset \mathbb{R}^d$. 
~~\\
Then, letting $X^\mu(\xi_0)$ be defined by \myref{chara2}, for all $\phi \in C_b({\cal C})$,
$$
\E \left[ \phi(X^\mu(\xi_0)) \Big| {\cal F}^\beta_T \right] = \int_{\mathbb{R}^d} \phi(X^\mu(x)) d\mu_0(x) \; \; \text{a.s},
$$
where $(X^\mu(x))_{x \in \mathbb{R}^d}$ is defined in \myref{chap2-chara}.
\end{proposition}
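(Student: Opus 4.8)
The plan is to reduce the claimed identity to two ingredients: a substitution principle identifying $X^\mu(\xi_0)$ with the flow $X^\mu(\cdot)$ evaluated at the random point $\xi_0$, and the independence of $\xi_0$ from the common noise $\beta$.

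First I would record the measurability structure. Since $\sigma = 0$, the only noise entering \myref{chara2} is the common Brownian motion $\beta$, and the coefficients depend on $\omega$ only through $\mu$, which is $\mathcal{F}_T^\beta$-measurable (being, in the application, the transport-form solution driven by $\beta$). Consequently the flow $x \mapsto X^\mu(x) = (X_t^\mu(x))_{t \in [0,T]}$ defined by \myref{chara} is an $\mathcal{F}_T^\beta$-measurable random field; by the Kolmogorov estimate of Proposition \ref{kolmo} it admits a version that is continuous in $x \in K$, i.e. a random element $Y := (X^\mu(x))_{x \in K}$ of the Polish space $C(K; \mathcal{C})$, which I use from now on.

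The core step is the substitution principle $X^\mu(\xi_0) = Y(\xi_0) = X^\mu(x)\big|_{x=\xi_0}$ almost surely. For a simple $\mathcal{F}_0$-measurable datum $\xi_0 = \sum_k x_k \mathbf{1}_{A_k}$ with $A_k \in \mathcal{F}_0$ and $x_k \in K$, on each $A_k$ the process $X^\mu(x_k)$ solves \myref{chara2} with initial value $\xi_0 = x_k$, so by pathwise uniqueness (Remark \ref{continuity}) one gets $X^\mu(\xi_0) = \sum_k \mathbf{1}_{A_k} X^\mu(x_k) = Y(\xi_0)$. For general $\xi_0$, I would approximate it by simple $K$-valued $\mathcal{F}_0$-measurable variables $\xi_0^n \to \xi_0$ and pass to the limit: the left-hand side converges because solutions of \myref{chara2} depend continuously in $L^2$ on the initial condition, while the right-hand side converges by continuity of the map $x \mapsto Y(x) \in \mathcal{C}$.

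Finally, because $\beta$ is an $(\mathcal{F}_t)$-Brownian motion its whole path, and hence $\mathcal{F}_T^\beta$, is independent of $\mathcal{F}_0$; in particular $\xi_0$ is independent of $\mathcal{F}_T^\beta$ with law $\mu_0$. The evaluation map $F : C(K; \mathcal{C}) \times K \to \mathbb{R}$, $F(g, x) = \phi(g(x))$, is continuous, and $Y$ is $\mathcal{F}_T^\beta$-measurable, so the freezing lemma for conditional expectations yields
\[
\mathbb{E}\left[ \phi(X^\mu(\xi_0)) \,\big|\, \mathcal{F}_T^\beta \right] = \mathbb{E}\left[ F(Y, \xi_0) \,\big|\, \mathcal{F}_T^\beta \right] = \int_{\mathbb{R}^d} F(Y, x)\, d\mu_0(x) = \int_{\mathbb{R}^d} \phi(X^\mu(x))\, d\mu_0(x),
\]
which is the asserted identity. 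The main obstacle is the substitution principle: it requires the joint measurability and $x$-continuity of the flow (furnished by Proposition \ref{kolmo}) together with a careful limiting argument, the delicacy being that the coefficients are only locally Lipschitz, so that both \myref{chara2} and the flow \myref{chara} are themselves obtained through the stopping-time construction of Remark \ref{continuity}.
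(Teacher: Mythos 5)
Your proposal is correct and follows essentially the same route as the paper: reduce to simple $\mathcal{F}_0$-measurable initial data where pathwise uniqueness gives $X^\mu(\xi_0)=\sum_k \mathbf{1}_{A_k}X^\mu(x_k)$, exploit the independence of $\mathcal{F}_0$ from $\mathcal{F}_T^\beta$ together with the $\mathcal{F}_T^\beta$-measurability of the flow, and pass to the limit using the almost-sure continuity of $x \mapsto X^\mu(x)$ from Remark \ref{continuity}. The only (cosmetic) difference is that you first establish the substitution identity $X^\mu(\xi_0)=Y(\xi_0)$ for general $\xi_0$ and then invoke the freezing lemma once, whereas the paper proves the conditional-expectation identity for simple data and takes the limit directly in that identity; both versions rely on the same ingredients.
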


\begin{proof}[Proof]
Let us first consider the case where $\xi_0$ takes a finite number of values : let us introduce a partition 
$(A_k)_{k \in \{ 1, \ldots, n \} } \in ({\cal F}_0)^n$ of $\Omega$ and $(x_k)_{k \in \{1 , \ldots, n \}} \in (\mathbb{R}^d)^n$ such that
\begin{align}
\xi_0 = \sum_{k=1}^n x_k \mathbb{1}_{A_k}.
\label{finite}
\end{align}
One can easily prove that, in this case,
$$
X^\mu_t(\xi_0) = \sum_{k=1}^n X^\mu_t(x_k) \mathbb{1}_{A_k}, \; \; t \in [0,T], \; \; \text{a.s}
$$
by checking that the left and right hand side both satisfy problem \myref{chara2}, for which path-wise uniqueness is established. Given $\phi \in C_b({\cal C})$, it follows that
\begin{align*}
\E \left[ \phi( X^\mu(\xi_0)) \Big| {\cal F}_T^\beta \right] & = 
\sum_{k=1}^n \E \left[ \phi( X^\mu(x_k)) \mathbb{1}_{A_k} \Big| {\cal F}_T^\beta \right]
\\
& = \sum_{k=1}^n \phi(X^\mu(x_k)) \mu_0(\{ x_k \})
= \int_{\mathbb{R}^d} \phi(X^\mu(x)) d\mu_0(x),
\end{align*}
where we have used the facts that $X^\mu(x) \in {\cal C}$ is ${\cal F}^\beta_T$-measurable and ${\cal F}_0$ is independent of ${\cal F}^\beta_T$.
\\
Given a general random variable $\xi_0$ of law $\mu_0$, let us introduce a sequence $(\xi_0^k)_{k \ge 1}$ of random variables of the form \myref{finite} such that $\xi_0^k \to \xi_0$ a.s. Denoting by $\mu_0^k$ the law of $\xi_0^k$, since the almost sure convergence implies the convergence of the laws, we deduce that $\mu_0^k \to \mu_0$ weakly. For all $k \ge 1$,
$$
\E \left[ \phi(X^\mu(\xi_0^k)) \Big| {\cal F}^\beta_T \right] = \int_{\mathbb{R}^d} \phi(X^\mu(x)) d\mu_0^k(x) \; \; \text{a.s}.
$$
Recalling that the mapping $x \mapsto X^\mu(x) \in {\cal C}$ is almost-surely continuous (see Remark \ref{chap2-continuity}), the result is deduced by taking the limit in $L^1(\Omega)$ as $k$ goes to infinity, using the dominated convergence theorem, 
\end{proof}

With definition \myref{chara2} in mind, introducing the unique solution $\mu$ of \myref{conservation} of the transport form (given by Theorem \ref{chap2-thm2}) the variables considered in Theorem \ref{thm3} may be rewritten as
\begin{align}
X^{i,N} = X^{\mu^N}(\xi_0^i), \; \; \; \; X^i = X^{\mu}(\xi_0^i),
\end{align}
where the empirical measure is of the transport form $\mu^N = (X^{\mu^N})^*\mu_0^N$ with the random initial measure $\mu_0^N = \frac{1}{N} \sum_{i=1}^N \delta_{\xi_0^i}$. Note that the strong law of large numbers gives, for any $\psi \in C_b(\mathbb{R}^d)$,
$$
\langle \psi, \mu_0^N \rangle = \frac{1}{N} \sum_{i=1}^N \psi(\xi_0^i) \xrightarrow[N \to \infty]{} \int \psi(x) d\mu_0(x) \; \; \text{a.s}.
$$
Since $\mu_0^N$ and $\mu_0$ are supported in the compact set $K \subset \mathbb{R}^d$, we easily deduce that
$$
\mu_0^N \to \mu_0 \text{ in } L^p(\Omega ; {\cal P}_p(\mathbb{R}^d)).
$$
From here, we may easily proceed as in the proof of Proposition \ref{comparison} to obtain
$$
\E \left[ \sup_{t \in [0, \tau_R^N]} W_p^p[\mu^N_t, \mu_t] \right] \le C_{p,R,T} \E \left[ W_p^p[\mu_0^N, \mu_0] \right] \xrightarrow[N \to \infty]{} 0,
$$
where the stopping time is defined as
$$
\tau_R^{N} = \inf \left\{ t \ge 0, \; \sup_{x \in K} \Big( |X^{\mu^N}_t(x)| + |X^{\mu}_t(x)| \Big) \ge R  \right\} \wedge T .
$$
The arguments developed in section \ref{chap2-strong} then provide the first convergence in Theorem \ref{thm3}:
$$
\E \left[W_p^p[\mu^N, \mu]  \right] \to 0.
$$
The mean-field limit being established, the propagation of chaos announced in \myref{chaos} will be deduced by exploiting the following symmetry property: using (a slightly tweaked version of) Proposition~\ref{conditional_law}, one can write, for all $r \ge 1$ and $\phi \in C_b({\cal C}^r)$,
\begin{align*}
\E \left[ \phi(X^{1,N}, \ldots, X^{r,N} ) \Big| {\cal F}^\beta_T \right]
& = \E \left[ \phi(X^{\mu^N}(\xi_0^1), \ldots, X^{\mu^N}(\xi_0^r) ) \Big| {\cal F}^\beta_T \right]
\\
& = \int_{(\mathbb{R}^d)^r}
\phi(X^{\mu^N}(x_1), \ldots, X^{\mu^N}(x_r) ) d\mu_0^{\otimes r}(x_1, ... , x_r) \; \; \text{a.s}
\end{align*}
since, by independence, ${\cal L}(\xi_0^1, \ldots, \xi_0^r) = \mu_0^{\otimes r}$. In particular, this shows that
\begin{align}
\forall \sigma \in {\cal S}_r, \; \; \; 
\E \left[ \phi(X^{\sigma(1),N}, \ldots, X^{\sigma(r),N} ) \Big| {\cal F}^\beta_T \right] = \E \left[ \phi(X^{1,N}, \ldots, X^{r,N} ) \Big| {\cal F}^\beta_T \right] \; \; \text{a.s}.
\label{symmetry}
\end{align}
The convergence \myref{chaos} can now be proved in the same way as in \cite{flandoli}, Theorem 24. We detail the proof here for the sake of completeness: for simplicity, let us consider the case $r=2$. For $\phi_1, \phi_2 \in C_b({\cal C})$ with $|\phi_1|, |\phi_2| \le M$, let us write
\begin{align*}
\E \Big| \E \left[ \phi_1(X^{1,N}) \phi_2(X^{2,N}) \Big| {\cal F}^\beta_T \right] - \langle \phi_1, \mu \rangle \langle \phi_2 , \mu \rangle \Big| \le \E [A^N] + \E [B^N]
\end{align*}
where
\begin{align*}
& A^N = \Big| \E \left[ \phi_1(X^{1,N}) \phi_2(X^{2,N}) \Big| {\cal F}^\beta_T \right] - 
\E \left[ \langle \phi_1, \mu^N \rangle \langle \phi_2 , \mu^N \rangle \Big| {\cal F}^\beta_T \right] \Big|,
\\
& B^N = \Big| \E \left[ \langle \phi_1, \mu^N \rangle \langle \phi_2 , \mu^N \rangle \Big| {\cal F}^\beta_T \right]
- \E \left[ \langle \phi_1, \mu \rangle \langle \phi_2 , \mu \rangle \Big| {\cal F}^\beta_T \right] \Big|.
\end{align*}
On one hand, using the symmetry property \myref{symmetry}, we may rewrite $A^N$ as
\begin{align*}
A^N & = \Big|
\frac{1}{N^2-N} \sum_{i \neq j} \E \left[ \phi_1(X^{i,N}) \phi_2(X^{j,N}) \Big| {\cal F}^\beta_T \right]
- \frac{1}{N^2} \sum_{i,j}  \E \left[ \phi_1(X^{i,N}) \phi_2(X^{j,N}) \Big| {\cal F}^\beta_T \right]
\Big|
\\
& \le \left( \frac{1}{N^2-N} - \frac{1}{N^2} \right)(N^2-N)M^2 + \frac{1}{N}M^2 = 2 \frac{M^2}{N} \to 0 .
\end{align*}
On the other hand, 
\begin{align*}
\E [ B^N ] \le \E \Big| \langle \phi_1, \mu^N \rangle \langle \phi_2, \mu^N \rangle - \langle \phi_1, \mu \rangle \langle \phi_2, \mu \rangle \Big| \to 0
\end{align*}
using the dominated convergence theorem, given that $\mu^N \to \mu$ in ${\cal P}_p({\cal C})$ (hence in particular $\langle \phi , \mu^N \rangle \to \langle \phi, \mu \rangle$) in probability. This proves \myref{chaos}.
~~\\
Furthermore, for all $\phi \in C_b({\cal C})$, since $\mu = (X^\mu)^* \mu_0$ is of the transport form, Proposition \ref{conditional_law} gives
$$
\E \left[ \phi(X^i) \Big| {\cal F}^\beta_T \right]  =  
\E \left[ \phi(X^\mu(\xi_0^i)) \Big| {\cal F}^\beta_T \right] = \int_{\mathbb{R}^d} \phi(X^\mu(x)) d\mu_0(x)
= \int_{\cal C} \phi(x) d\mu(x)
 \; \; \text{a.s}
$$
so that $\mu \in {\cal P}({\cal C})$ is indeed a version of the conditional law ${\cal L} \left( X^i \Big| {\cal F}^\beta_T \right)$.
~~\\
 Lastly, one may again use (a slightly tweaked version of) Proposition \ref{conditional_law} to write
\begin{align*}
\E \| X^{i,N} - X^i \|_\infty^p = \E \| X^{\mu^N}(\xi_0^i) - X^\mu(\xi_0^i) \|_\infty^p
= \E \int_{K} \sup_{t \in [0,T]} \Big| X_t^{\mu^N}(x) - X_t^\mu(x) \Big|^p d\mu_0(x).
\end{align*}
Once again, the arguments developed in section \ref{chap2-strong} (see \myref{def_I} and below) give the required convergence: we obtain $X^{i,N} \to X^i$ in $L^p(\Omega ; {\cal C})$, which concludes the proof of Theorem~\ref{thm3}.

\end{subsection}

\begin{subsection}*{Acknowledgment}

A. Rosello is partially supported by the French government thanks to the "Investissements d'Avenir"
program ANR-11-LABX-0020-01. 

\end{subsection}

\end{section}

\nocite{*}
\bibliographystyle{plain}
\bibliography{particle}


\end{document}